\newtheorem{theorem}{Theorem}[section]
\newtheorem{lemma}{Lemma}[section]
\numberwithin{figure}{section}
\numberwithin{table}{section}
\def\XXint#1#2#3{{\setbox0=\hbox{$#1{#2#3}{\int}$}
\vcenter{\hbox{$#2#3$}}\kern-.51\wd0}}
\newcommand{\reff}[1]{{\rm (\ref{#1})}}
\newcommand{\R}{\mathbb{R}}            
\newcommand{\ve}{\varepsilon}          
\begin{document}

\title{A Positive and Energy Stable Numerical Scheme for the Poisson--Nernst--Planck--Cahn--Hilliard Equations with Steric Interactions}

\author{Yiran Qian\thanks{
Department of Mathematics and Mathematical Center for Interdiscipline Research, Soochow University, 1 Shizi Street, Suzhou 215006, Jiangsu, China.  E-mail: 20194007010@stu.suda.edu.cn.}
\and
Cheng Wang\thanks{
Department of Mathematics, University of Massachusetts Dartmouth, North Dartmouth, MA 02747-2300. E-mail: cwang1@umassd.edu.}
\and
Shenggao Zhou\thanks{
Corresponding author. Department of Mathematics and Mathematical Center for Interdiscipline Research, Soochow University, 1 Shizi Street, Suzhou 215006, Jiangsu, China. E-mail: sgzhou@suda.edu.cn.}
}
\maketitle
\begin{abstract}

In this work, we consider numerical methods for the Poisson--Nernst--Planck--Cahn--Hilliard (PNPCH) equations with steric interactions, which correspond to an $H^{-1}$ gradient flow of a free-energy functional that consists of electrostatic free energies, steric interaction energies of short range, entropic contribution of ions, and concentration gradient energies. We propose a novel energy stable numerical scheme that respects mass conservation and positivity at the discrete level.  Existence and uniqueness of the solution to the proposed nonlinear scheme are established by showing that the solution is a unique minimizer of a convex functional over a closed, convex domain. The positivity of numerical solutions is further theoretically justified by the singularity of the entropy terms, which prevents the minimizer from approaching zero concentrations.  A further numerical analysis proves discrete free-energy dissipation. Extensive numerical tests are performed to validate that the numerical scheme is first-order accurate in time and second-order accurate in space, and is capable of preserving the desired properties, such as mass conservation, positivity, and free energy dissipation, at the discrete level. Moreover, the PNPCH equations and the proposed scheme are applied to study charge dynamics and self-assembled nanopatterns in highly concentrated electrolytes that are widely used in electrochemical energy devices. Numerical results demonstrate that the PNPCH equations and our numerical scheme are able to capture nanostructures, such as lamellar patterns and labyrinthine patterns in electric double layers and the bulk, and multiple time relaxation with multiple time scales. The multiple time relaxation dynamics with metastability take long time to reach an equilibrium, highlighting the need for robust, energy stable numerical schemes that allow large time stepping.  In addition, we numerically characterize the interplay between cross steric interactions of short range and the concentration gradient regularization, and their impact on the development of nanostructures in the equilibrium state.  \\

\noindent \textbf{Keywords:} Poisson--Nernst--Planck--Cahn--Hilliard Equations; Positivity; Energy Stability; Electric Double Layer;  Self-assembly Pattern
\end{abstract}

\medskip

\bigskip

\section{Introduction}
Ion transport is fundamental to a wide variety of biophysical processes and technological applications, e.g., transmembrane ion channels, electrochemical energy devices, and electrokinetics in microfluidics\cite{IonChanel_HandbookCRC15, BazantDiffChg_PRE04, BazantReview_ACIS09}. Based on a mean-field approximation, the classical Poisson--Nernst--Planck (PNP) theory has been derived to describe ion dynamics in various scenarios. The diffusion and convection of ions under gradients of the electrostatic potential are modeled by the Nernst--Planck equations. In turn, the  electrostatic potential is governed by the Poisson equation with charge density arising from mobile ions.  Despite its success in many applications, the PNP theory is valid only for dilute solutions due to various underlying assumptions made in mean-field approximations~\cite{BazantReview_ACIS09, LiuJiXu_SIAP18}. For instance, it neglects ionic steric effects that play a crucial role in the description of concentrated electrolytes in confined environments, e.g., high ionic concentrations in ion channels.   

To address this issue, several versions of modified PNP theories with steric effects have been developed in the past few decades. One approach to account for steric effects is via the incorporation of entropy of solvent molecules to the electrostatic free energy~\cite{BAO_PRL97, BazantSteric_PRE07, Li_Nonlinearity09, ZhouWangLi_PRE11, BZLu_BiophyJ11, LiLiuXuZhou_Nonliearity13, NuoZhouMcCammon_JPCB14, BZLu_JCP14, JiZhou_CMS19}. One salient feature of this type of models is a  saturation concentration for compactly packed counterions in the vicinity of charge surfaces. Another strategy to include steric effects is to add an excess chemical potential, which can be given by the density functional theory\cite{JZWu_JPCM14, GLin_Cicp14}, or by the Lennard-Jones potential for hard-sphere repulsions~\cite{HyonLiuBob_CMS10, BobHyonLiu_JCP10, LinBob_CMS14}. These models often give rise to integro-differential equations that are computationally intractable. To avoid nonlocal integral terms, local approximations of nonlocal integrals up to leading order terms are proposed to obtain approximate local models~\cite{HyonLiuBob_CMS10, HorngLinLiuBob_JPCB12, LinBob_CMS14}. Nonetheless, the resulting model has been shown to be ill-posed for concentrated electrolytes within certain parameter regimes~\cite{LinBob_CMS14, Gavish_PhysD18}. To remedy this issue, concentration gradient energies that are higher order terms of the local approximations of nonlocal integrals can be added to regularize the solution~\cite{GavishYochelis_JPCLett16, GavishEladYochelis_JPCLett18, Gavish_PhysD18, GavishLiuEisenberg_JPCB18}. Such gradient energy terms are widely used in the Ginzburg--Landau theory for the description of phase separation in mixtures. A conserved $H^{-1}$ gradient flow of the Ginzburg-Landau functional gives rise to the Cahn--Hilliard (CH) equations~\cite{CH_JCP58}. 

An $H^{-1}$ gradient flow of the electrostatic free energy with additional concentration gradient energies leads to the  following Poisson--Nernst--Planck--Cahn--Hilliard (PNPCH) equations with steric interactions:
\begin{equation*}\left\{
\begin{aligned}
&\frac{\partial c^{m}}{\partial t}=\epsilon^{m}\nabla\cdot\left[c^{m}\nabla
\left(z^{m}\psi+\log c^{m}+\sum^{M}_{n=1}g^{mn}c^{n}-\sigma^{m}\Delta c^{m}\right)\right],\\
&-\nabla\cdot(\kappa\nabla\psi)=\sum^{M}_{m=1}z^{m}c^{m}+\rho^{f},
\end{aligned}\right.
\end{equation*}
where $\psi$ is the electrostatic potential, $c^{m}$ is the ion concentration for the $m$th species, $z^{m}$ is the valence, $\rho^{f}$ is the fixed charge density, $\kappa$ and $\epsilon^{m}$ arise from nondimensionalization, $G=\left(g^{mn}\right)$ is the coefficient matrix for steric interactions, and $\sigma^{m}$ is a gradient energy coefficient; cf.~Section~\ref{s:PNPCH}.  Such type of equations have been successfully applied to study ion permeation and selectivity in ion channels~\cite{GavishLiuEisenberg_JPCB18} and charge dynamics in room temperature ionic liquids and highly concentrated electrolytes~\cite{GavishYochelis_JPCLett16, GavishEladYochelis_JPCLett18}. 

We focus on the development of numerical methods for the PNPCH equations. Many efforts have been devoted to the development of numerical methods for the  PNP-type equations, ranging from finite difference schemes to discontinuous Galerkin (DG) methods~\cite{ProhlSchmuck09, LHMZ10, ZCW11, Gibou_JCP14, AFJKXL17, GaoHe_JSC17, DSWZhou_CICP18, DingWangZhou_NMTMA19}. In order to obtain physically faithful numerical solutions, it is highly desirable and crucial to preserve physical properties of the analytical solutions, such as mass conservation, free-energy dissipation, and positivity, at the discrete level.  A finite difference scheme was developed for the PNP equations in $1$D~\cite{AMEKLL14}; it was proved that the scheme guarantees numerical positivity if a time step size satisfies certain constraint conditions. An energy satisfying finite difference scheme based on a Slotboom transformation is proved to maintain discrete positivity under a constraint on the mesh ratio~\cite{LW14}.  An arbitrary-order free energy satisfying DG method is proposed to numerically solve the 1D PNP equations. The positivity of numerical solutions was enforced by a delicately designed accuracy-preserving limiter~\cite{LW17}.  A finite element method that can ensure positivity of numerical solutions was developed to solve both the PNP equations and PNP equations coupled with the Navier--Stokes equations~\cite{MXL16}. A semi-implicit finite difference scheme that ensures positivity and discrete energy dissipating properties was established in~\cite{HuHuang_Sub2019}. Based on harmonic-mean approximations~\cite{QianWangZhou_JCP19},  a finite difference scheme that is proved to respect mass conservation and unconditional positivity preservation was proposed for PNP equations with steric effects~\cite{DingWangZhou_JCP19}. Estimates on the condition number of the coefficient matrix was established as well. To the best of our knowledge, numerical methods with the desired properties for the PNPCH equations are still missing. 

In this work, we first derive the PNPCH equations corresponding to an $H^{-1}$ gradient flow of a free-energy functional that includes electrostatic free energies, steric interaction energies of short range, entropic contribution of ions, and concentration gradient energies. To numerically solve the PNPCH equations, we propose a novel energy stable numerical scheme that respects mass conservation and positivity at the discrete level. It is shown that the solution to the proposed nonlinear scheme corresponds to a unique minimizer of a convex functional over a closed, convex domain, establishing the existence and uniqueness of the solution. The positivity of numerical solutions is further theoretically justified by making use of the singular nature of the entropy terms, which prevents the minimizer from approaching zero concentrations. It is noted that such an argument has been used to prove the positivity preservation of numerical methods for the Cahn--Hilliard equations and quantum diffusion equations~\cite{CWWW_JCP2019, dong19b, HuoLiu_Sub20}; also see the related analysis~\cite{duan19a, duan19b} to deal with energetic variational method in the particle coordinate approach. Further numerical analysis establishes discrete free-energy dissipation of the proposed scheme. 

We perform extensive numerical tests to demonstrate that the numerical scheme is first-order accurate in time and second-order accurate in space, and is capable of preserving the desired properties, including mass conservation, positivity, and free energy dissipation. Moreover, the PNPCH equations and the proposed scheme are applied to study multiple time scale dynamics and self-assembled nanopatterns in highly concentrated electrolytes. The multiple time scale dynamics often take long time to reach an equilibrium, highlighting the need for robust, energy stable numerical schemes that allow large time stepping. Numerical results demonstrate that the PNPCH equations and the proposed numerical scheme are able to capture nanostructures, such as lamellar patterns and labyrinthine patterns, and multiple time scale dynamics with multiple time scales. In addition, we investigate the interplay between cross steric interactions of short range and the concentration gradient regularization, and their impact on the development of nanostructures in the equilibrium state.   


The rest of the paper is organized as follows. In Section~\ref{s:PNPCH} we derive the PNPCH equations from a free energy functional. In Section~\ref{s:NumericalScheme} we present the finite difference scheme. In Section~\ref{s:Properties} we prove main properties of the numerical scheme at the discrete level. Section~\ref{s:Numerics} is devoted to the numerical results. 
Finally, some concluding remarks are made in Section~\ref{s:Conclusions}.

\section{The physical model}\label{s:PNPCH}
We consider an electrolyte solution with $M $ ionic species, occupying a region $\Omega$ in $\mathbb{R}^{3}$. We denote by $c^{m}=c^{m}(t, \mathbf{x})$ $(m=1, \cdots, M)$ the local ionic concentration of the $m$th ionic species at position $\mathbf{x}\in\Omega$ and time $t$. 
Denote by $c=(c^{1}, c^{2},..., c^{M})^{T}$. For such a charged system, we consider the following free-energy functional of ionic concentrations:
\begin{equation}\label{CHFE}
\begin{aligned}
F[c]=\int_{\Omega}\frac{1}{2}\rho\psi d\textbf{x}+\beta^{-1}\sum^{M}_{m=1}\int_{\Omega}c^{m}\left[\log(\Lambda^{3}c^{m})-1\right] d\textbf{x}
+\int_{\Omega}\frac{1}{2}c^{T}Gc d\textbf{x}+\sum^{M}_{m=1}\int_{\Omega}\frac{\sigma^{m}}{2}|\nabla c^{m}|^{2} d\textbf{x}.
\end{aligned}
\end{equation}
The first term represents the electrostatic energy.  The total charge density $\rho$ is given by
$$\rho=\sum^{M}_{m=1}q^{m}c^{m}+\rho^{f},$$
where $q^{m}=z^{m}e$, with $z^{m}$ being the valence of the $m$th ionic species and $e$ being the elementary charge, and the function $\rho^f \in  C(\bar{\Omega}) $ represents the distribution of fixed charges. The electrostatic potential $\psi$ is governed by the Poisson's equation
\begin{equation}\label{POIeq}
\begin{aligned}
-\nabla\cdot(\varepsilon_{0}\varepsilon_{r}\nabla\psi)=\rho \quad \mathrm{in}\quad\Omega,
\end{aligned}
\end{equation}
where $\varepsilon_{0}$ is the vacuum dielectric permittivity and $\varepsilon_{r}\geq1$ is a spatially dependent 
dielectric coefficient function.  

The second term describes the entropic contribution of ions to the total free energy. The parameter $\beta=1/k_{B}T$ is the inverse of thermal energy, with $k_{B}$ being the Boltzmann constant and $T$ being the absolute temperature. The constant $\Lambda$ is the thermal de Broglie wavelength. The ionic steric interaction energy is given in the third term, in which the $M\times{M}$ matrix $G=(g^{mn})$ is symmetric with non-negative entries. The entry $g^{mn}$ is  related to the second-order virial coefficients of hard spheres, depending on the size of the $m$th and $n$th ionic species~\cite{ZhouJiangDoi_PRL17, DingWangZhou_JCP19}. Diagonal entries of $G$ describe self-steric interactions of ions of the same species, and off-diagonal entries correspond to short-range cross steric interactions between ions of different species. A similar model based on the leading order local approximation of the Lennard-Jones interaction energies has been developed in the works~\cite{HyonLiuBob_CMS10, HorngLinLiuBob_JPCB12, LinBob_CMS14, LinBob_Nonlinearity15, Gavish_PhysD18, SWZ_CMS18, GavishLiuEisenberg_JPCB18}, while the interaction matrix $G$ in that model has a different interpretation.

The fourth term, arising from the Cahn--Hilliard mixture theory~\cite{CH_JCP58}, describes a gradient energy that penalizes large concentration gradients. Here $\sigma^{m}$ are gradient energy coefficients. Since concentration gradient terms can be regarded as high-order approximation of the Lennard-Jones interaction energies, they have been recently introduced to the PNP theory to describe steric interactions in ionic liquids~\cite{GavishYochelis_JPCLett16, BGUYochelis_PRE17, GavishEladYochelis_JPCLett18} and ion channels~\cite{Gavish_PhysD18, GavishLiuEisenberg_JPCB18}. 


We shall derive governing equations based on the free-energy functional~\reff{CHFE}.
Taking the first variation of $F[c]$ with respect to ion concentrations, we obtain the chemical potential $\mu^{m}$ of the $m$th species of ions:
\begin{equation}\label{chemicalMU}
\begin{aligned}
\mu^{m}=\frac{\delta F[c]}{\delta c^{m}}=q^{m}\psi+\beta^{-1}\log(\Lambda^{3}c^{m})+\sum^{M}_{n=1}g^{mn}c^{n}-\sigma^{m}\Delta c^{m}.
\end{aligned}
\end{equation}
The force balance between thermodynamic force and hydrodynamic drag gives rise to the velocity of the $m$th ion species
$$\eta^{m}=-\beta D^{m}\nabla \mu^{m},$$
where $D^{m}$ is the diffusion constant.
The time evolution of $c^{m}$ satisfies the conservation equation
$$\frac{\partial c^{m}}{\partial t}=-\nabla \cdot (c^{m}\eta^{m}).$$
With the chemical potential~\reff{chemicalMU}, we obtain
\[
\begin{aligned}
&\frac{\partial c^{m}}{\partial t}=D^{m}\nabla\cdot\left\{\beta c^{m}\nabla\left[q^{m}\psi+\beta^{-1}\log(\Lambda^{3}c^{m})+\sum^{M}_{n=1}g^{mn}c^{n}-\sigma^{m}\Delta c^{m}\right]\right\}.
\end{aligned}
\]
Meanwhile, the following nondimensionalized variables are introduced~\cite{BazantSteric_PRE07, BazantDiffChg_PRE04}
\[
\begin{aligned}
&\tilde{\psi}=e\beta \psi, \quad \tilde{x}=\frac{x}{L}, \quad \tilde{c}^{m}=\frac{c^{m}}{c_{0}}, \quad \tilde{t}=\frac{tD_{0}}{L\lambda_{D}},  \quad \tilde{D}^{m}=\frac{D^{m}}{D_{0}}, \\
 &\tilde{v}=\Lambda^{3}c_{0},\quad  \tilde{\sigma}^{m}=\frac{\sigma^{m} c_{0}\beta}{L^{2}}, \quad \tilde{G}=c_{0}\beta G,
 \quad \tilde{\rho}^{f}=\frac{\rho^{f}}{ec_{0}},
 \end{aligned}
\]
where $c_{0}$ is a characteristic concentration, $L$ is a macroscopic length scale, and $\lambda_{D}$ is the Debye length given by
\[\begin{aligned}
\lambda_{D}:=\sqrt{\frac{\varepsilon_{0}\varepsilon_{r}}{2e^{2}c_{0}\beta}}.
\end{aligned}
\]
For simplicity, we omit the tildes and obtain the dimensionless Poisson--Nernst--Planck--Cahn--Hilliard (PNPCH) equations
\begin{equation}\label{PNPCHE}\left\{
\begin{aligned}
&\frac{\partial c^{m}}{\partial t}=\epsilon^{m}\nabla\cdot\left[c^{m}\nabla
\left(z^{m}\psi+\log c^{m}+\sum^{M}_{n=1}g^{mn}c^{n}-\sigma^{m}\Delta c^{m}\right)\right],\\
&-\nabla\cdot(\kappa\nabla\psi)=\sum^{M}_{m=1}z^{m}c^{m}+\rho^{f},
\end{aligned}\right.
\end{equation}
where $\epsilon^{m}=\frac{\lambda_{D}}{L}D^{m}$ and $\kappa=\frac{2\lambda_{D}^{2}}{L^{2}}$. Since the dielectric coefficient function $\varepsilon_{r}\geq 1 $, there exists a lower bound $\kappa_0$ of $\kappa$, i.e., 
\begin{equation}\label{kappa_0}
\kappa\geq \kappa_0:=\frac{\varepsilon_{0}}{e^{2}c_{0}L^2\beta}.
\end{equation}
After omitting the tildes over the dimensionless variables again, the free energy functional becomes
\begin{equation}\label{NONF}
\begin{aligned}
F[c]=&\int_{\Omega}\frac{1}{2}\left(\sum^{M}_{m=1}z^{m}c^{m}+\rho^{f}\right)\psi d\textbf{x}+\sum^{M}_{m=1}\int_{\Omega}c^{m}\left[\log(vc^{m})-1\right] d\textbf{x}\\
&\quad+\int_{\Omega}\frac{1}{2}c^{T}Gc d\textbf{x}+\sum^{M}_{m=1}\int_{\Omega}\frac{\sigma^{m}}{2}\left|\nabla c^{m}\right|^{2} d\textbf{x}.
\end{aligned}
\end{equation}

For simplicity, we assume that the domain $\Omega$ is a cuboid and consider periodic boundary conditions on the boundary $\partial \Omega$. Since $c^m$ represents the concentration of ions, it is reasonable to assume that  $c^{m}(t,\textbf{x}) \geq 0$ for $\textbf{x}\in \Omega$ and $t>0$. By periodic boundary conditions and equation system~\reff{PNPCHE},  we have mass conservation of each species of ions in the sense that
\[
\frac{d}{dt} \int_{\Omega} c^m(t, \textbf{x}) d\textbf{x} = \int_{\partial\Omega} c^{m} \epsilon^{m} \nabla
\left(z^{m}\psi+\log c^{m}+\sum^{M}_{n=1}g^{mn}c^{n}-\sigma^{m}\Delta c^{m}\right) \cdot  \textbf{n} dS =0,
\]
where $\textbf{n}$ is a unit normal vector defined on $\partial \Omega$.  Thus, the initial condition, 
$$c^m(0, \textbf{x}) = c^{m}_{in}(\textbf{x}),$$ determines the total mass of the $m$th species of ions in the system.  It is assumed that the initial data satisfy the neutrality condition
\begin{equation}\label{NeuCond}
\int_{\Omega} \rho^{f} d\textbf{x} + \sum^{M}_{m=1} \int_{\Omega} z^m c^{m}_{in}(\textbf{x}) d\textbf{x} =0,
\end{equation}
which is necessary for the solvability of the Poisson equation~\reff{POIeq} with periodic boundary conditions. 

We also consider time evolution of the free energy
\begin{align*}
\frac{d}{dt}F&= \sum^{M}_{m=1}\int_\Omega  \frac{\delta F}{\delta c^{m}}\frac{\partial c^{m}}{\partial t} d\textbf{x} \\
		  &= -\sum^{M}_{m=1}\int_\Omega \epsilon^{m}c^{m} \left| \nabla
\left(z^{m}\psi+\log c^{m}+\sum^{M}_{n=1}g^{mn}c^{n}-\sigma^{m}\Delta c^{m}\right)\right|^{2}d\textbf{x}\leq0,~\forall t>0. 
\end{align*}
In summary, we have the following physical properties for any solution to the PNPCH equations \reff{PNPCHE}:
\begin{align}
   &\bullet~\text{Positivity: } \quad \text{If }  c^{m}_{in}(\textbf{x})\geq0,~\text{then}~ c^{m}(t,\textbf{x})\geq0, ~\forall \textbf{x}\in \Omega ; \hspace{5cm}\label{3properties1} \\
  &\bullet~ \text{Mass Conservation: } \quad \int_{\Omega}c^{m}(t,\textbf{x})d\textbf{x}= \int_{\Omega}c^{m}_{in}(\textbf{x})d\textbf{x}; \label{3properties2}\\
 &\bullet~ \text{Free-energy Dissipation: } \quad \frac{d}{dt}F \leq0, ~\forall t>0. \label{3properties3}
\end{align}

\section{The numerical scheme}\label{s:NumericalScheme}
\subsection{Discretization preliminaries}
For simplicity, we present our numerical scheme in $\mathbb{R}^{3}$ with $\Omega=[a, b]\times[a, b]\times[a, b]$. We cover $\Omega$ with grid points 
\[
\left\{x_i, y_j,z_{k}\right\}= \left\{a+\left(i-\frac{1}{2}\right)h, a+\left(j-\frac{1}{2}\right)h, a+\left(k-\frac{1}{2}\right)h\right\} ~\text{for}~~ i,j,k=1,\cdots, N,
\]
where $N$ is the number of grid points in each dimension and $h=\frac{b-a}{N}$ is a uniform spatial mesh step size. 

We briefly recall notations and operators for discrete functions from~\cite{SCJ09,ChenLiuWangWise_MathComp15, CWWW_JCP2019}. To facilitate the presentation, we introduce the following spaces of $3D$ periodic grid functions:
\[
\begin{aligned}
&\mathcal{C}_{\mathrm{per}}:=\{v| v_{i,j,k}=v_{i+\alpha N,j+\beta N, k+\gamma N}, \quad\forall i,j,k, \alpha, \beta, \gamma  \in\mathbb{Z} \},\\
&\mathring{\mathcal{C}}_{\mathrm{per}}:=\{v \in C_{\mathrm{per}}|\bar{v} = 0\}, \\
&\mathcal{E}^{x}_{\mathrm{per}}:= \{v| v_{i+\frac{1}{2},j,k}=v_{i+\frac12 +\alpha N,j+\beta N, k+\gamma N}, \quad\forall i,j,k, \alpha, \beta, \gamma  \in\mathbb{Z} \},
\end{aligned}
\]
where $\bar{v}:=\frac{h^{3}}{|\Omega|}\sum^{N}_{i,j,k=1}v_{i,j,k}$ is the average of a grid function $v$. The spaces $\mathcal{E}^{y}_{\mathrm{per}}$ and $\mathcal{E}^{z}_{\mathrm{per}}$ are analogously defined. We also introduce the following discrete operators for grid functions:
\[
\begin{aligned}
&d_{x}v_{i,j,k}:=\frac{1}{h}(v_{i+\frac{1}{2},j,k}-v_{i-\frac{1}{2},j,k}), \quad D_{x}v_{i+\frac{1}{2},j,k}:=\frac{1}{h}(v_{i+1,j,k}-v_{i,j,k}),\\
&a_{x}v_{i,j,k}:=\frac{1}{2}(v_{i+\frac{1}{2},j,k}+v_{i-\frac{1}{2},j,k}),\quad A_{x}v_{i+\frac{1}{2},j,k}:=\frac{1}{2}(v_{i+1,j,k}+v_{i,j,k}) , 
\end{aligned}
\]
and the discrete operators $d_{y}$, $d_{z}$, $D_{y}$, $D_{z}$, $A_{y}$, and $A_{z}$ are similarly defined. The discrete gradient $\nabla_{h}$ becomes 
$$\nabla_{h}v_{i,j,k}:=(D_{x}v_{i+\frac{1}{2},j,k},D_{y}v_{i,j+\frac{1}{2},k},D_{z}v_{i,j,k+\frac{1}{2}}),$$
and the discrete divergence $\nabla_{h}\cdot$ turns out to be 
$$\nabla_{h}\cdot\vec{f}_{i,j,k}:=d_{x}f^{x}_{i,j,k}+d_{y}f^{y}_{i,j,k}+d_{z}f^{z}_{i,j,k}, \quad \mbox{for $\vec{f}=(f^{x},f^{y},f^{z})$.} 
$$
The discrete Laplacian operator $\Delta_{h}$ is defined by
\[
\begin{aligned}
\Delta_{h}v_{i,j,k}:&=\nabla_{h}\cdot(\nabla_{h}v)_{i,j,k}=d_{x}(D_{x}v)_{i,j,k}+d_{y}(D_{y}v)_{i,j,k}+d_{z}(D_{z}v)_{i,j,k}.
\end{aligned}
\]
If $\mathcal{D}$ is a periodic scalar function, we introduce 
\[
\begin{aligned}
\nabla_{h}\cdot(\mathcal{D} \nabla_{h}v)_{i,j,k}:=d_{x}(\mathcal{D} D_{x}v)_{i,j,k}+d_{y}(\mathcal{D} D_{y}v)_{i,j,k}+d_{z}(\mathcal{D} D_{z}v)_{i,j,k}.
\end{aligned}
\]


We now define the following inner product and norms for grid functions:  
\[
\begin{aligned}
&\langle\nu,\xi\rangle_{\Omega}:=h^{3}\sum^{N}_{i,j,k=1}\nu_{i,j,k}\xi_{i,j,k}, \quad \nu,\xi\in\mathcal{C}_{\mathrm{per}},\\
&[\nu,\xi]_{x}:=\langle a_{x}(\nu\xi),1\rangle_{\Omega}\quad \nu,\xi\in\mathcal{E}^{x}_{\mathrm{per}}.
\end{aligned}
\]
And also, $[\nu,\xi]_{y}$ and $[\nu,\xi]_{y}$ could be analogously defined. We then introduce 
\[
[\vec{f_{1}},\vec{f_{2}}]_{\Omega} :=[f^{x}_{1},f^{x}_{2}]_{x}+[f^{y}_{1},f^{y}_{2}]_{y}+[f^{z}_{1},f^{z}_{2}]_{z},\quad\vec{f_{i}}=(f^{x}_{i},f^{y}_{i},f^{z}_{i})\in\vec{\mathcal{E}}_{\mathrm{per}}, i=1,2.
\]

For $\nu\in\mathcal{C}_{\mathrm{per}}$, we define $\|\nu\|^{2}_{2}:=\langle\nu,\nu\rangle_{\Omega}$, $\|\nu\|^{p}_{p}:=\langle|\nu|^{p},1\rangle_{\Omega}$, for $1\leq p\leq\infty$, and $\|\nu\|_{\infty}:=\max_{1\leq i,j,k\leq N}|\nu_{i,j,k}|$.
For $\nu\in\ \mathcal{C}_{\mathrm{per}}$ and $1\leq p<\infty$, we define the following discrete norms of a gradient
\[
\begin{aligned}
\|\nabla_{h}\nu\|^{p}_{p}:&=\|D_{x}\nu\|^{p}_{p}+\|D_{y}\nu\|^{p}_{p}+\|D_{z}\nu\|^{p}_{p}.
\end{aligned}
\]
In addition, the higher order discrete norms are defined as 
\[
\begin{aligned}
\|\nu\|^{2}_{H^{1}_{h}}:=\|\nu\|^{2}_{2}+\|\nabla_{h}\nu\|^{2}_{2}, \quad \|\nu\|^{2}_{H^{2}_{h}}:=\|\nu\|^{2}_{2}+\|\nabla_{h}\nu\|^{2}_{2}+\|\Delta_{h}\nu\|^{2}_{2}.
\end{aligned}
\]

We now introduce a discrete analogue of the space $H^{-1}_{\mathrm{per}}(\Omega)$~\cite{WangWise2011, CWWW_JCP2019}.  Consider a positive, periodic scalar function $\mathcal{D}$. For any $\phi\in\mathring{\mathcal{C}}_{\mathrm{per}}$, there exists a unique solution $\varphi\in\mathring{\mathcal{C}}_{\mathrm{per}}$ to the equation
$$\mathcal{L}_{\mathcal{D}}\varphi:=-\nabla_{h}\cdot (\mathcal{D}\nabla_{h}\varphi)=\phi,$$ with periodic boundary conditions discretized as 
\begin{equation}\label{PBCs}
\begin{aligned}
\varphi_{N+1,j,k}&=\varphi_{1,j,k}, \quad \varphi_{0,j,k}=\varphi_{N,j,k}, \quad j,k=0, \cdots, N+1,\\
\varphi_{i,N+1,k}&=\varphi_{i,1,k}, \quad \varphi_{i,0,k}=\varphi_{i,N,k}, \quad i,k=0, \cdots, N+1,\\
\varphi_{i,j,N+1}&=\varphi_{i,j,1}, \quad \varphi_{i,j,0}=\varphi_{i,j,N}, \quad i,j=0, \cdots, N+1.
\end{aligned}
\end{equation}
For any $\phi_{1}, \phi_{2}\in \mathring{\mathcal{C}}_{\mathrm{per}},$ we define an inner product
$$\langle\phi_{1}, \phi_{2}\rangle_{\mathcal{L}^{-1}_{\mathcal{D}}}:=[\mathcal{D}\nabla_{h}\varphi_{1},\nabla_{h}\varphi_{2}]_{\Omega},$$
where $\varphi_{i}\in\mathring{\mathcal{C}}_{\mathrm{per}}$ is the unique solution to

\begin{equation}\label{LOp}
\begin{aligned}
\mathcal{L}_{\mathcal{D}}\varphi_{i}=\phi_{i}, \quad i=1,2.
\end{aligned}
\end{equation}
By the discrete summation by parts, formula we have the following identities for periodic grid functions $\phi_{i}$: 
$$\langle\phi_{1}, \phi_{2}\rangle_{\mathcal{L}^{-1}_{\mathcal{D}}}=\langle\phi_{1}, \mathcal{L}^{-1}_{\mathcal{D}}\phi_{2}\rangle_{\Omega}=\langle\mathcal{L}^{-1}_{\mathcal{D}}\phi_{1}, \phi_{2}\rangle_{\Omega}.$$
We denote the norm associated to this inner product by
$\|\phi\|_{\mathcal{L}^{-1}_{\mathcal{D}}}:=\sqrt{\langle\phi, \phi\rangle_{\mathcal{L}^{-1}_{\mathcal{D}}}}$. 

\subsection{The numerical scheme}
We consider the discretization of the PNPCH equations~\reff{PNPCHE} on a time interval $[0, T]$ with $T>0$.  For any function $v= v(x,y,z, t): \Omega \times [0, T] \rightarrow \R$, we denote by $v_{i,j,k}^l$ numerical approximations of $v(x_i, y_j, z_k, t_l)$ on a grid point $\left\{x_i, y_j, z_{k}\right\}$ at time $t_l=l \Delta t$, where $l=0, \cdots, N_t$ and $\Delta t = T/N_t$ with $N_t$ being a positive number.   

We employ the idea of convex splitting and propose a semi-implicit discrete scheme for the PNPCH equations~\reff{PNPCHE}:
\begin{equation}\label{MCS}\left\{
\begin{aligned}
&\frac{c^{m,l+1}-c^{m,l}}{\Delta{t}}=\epsilon^{m}\nabla_{h}\cdot\left(\check{c}^{m,l}\nabla_{h}\mu^{m,l+1}\right), \\ 
&\mu^{m,l+1}= z^{m}\psi^{l+1}+\log c^{m,l+1}+\sum^{M}_{n=1}g_{c}^{mn}c^{n,l+1}-\sigma^{m}\Delta_h c^{m,l+1}-\sum^{M}_{n=1}g_{e}^{mn}c^{n,l}, \\
&-\nabla_{h}\cdot (\kappa \nabla_{h}\psi^{l+1})=\sum^{M}_{m=1}z^{m}c^{m,l+1}+\rho_h^{f},
\end{aligned}\right.
\end{equation}
where the mobilities are approximated by
\[
\begin{aligned}
&\check{c}^{m,l}_{i+\frac{1}{2},j,k}=A_{x}c^{m,l}_{i+\frac{1}{2},j,k}, \quad \check{c}^{m,l}_{i,j+\frac{1}{2},k}=A_{y}c^{m,l}_{i,j+\frac{1}{2},k}, \quad \check{c}^{m,l}_{i,j,k+\frac{1}{2}}=A_{z}c^{m,l}_{i,j,k+\frac{1}{2}},
\end{aligned}
\]
and $\rho^{f}_h$, the restriction of $\rho^{f}$ on grid points, is assumed to satisfy
\begin{equation}\label{NeuInit}
\overline{\rho_h^f} + \sum_{m=1}^M z^m \overline{c^{m}_{in}} = 0.
\end{equation}
Here $G_{c}=\left(g^{mn}_{c}\right)$ and $G_{e}=\left(g^{mn}_{e}\right)$ are both positive semi-definite matrices such that $G=G_{c}-G_{e}$. Notice that the choice of $G_c$ and $G_e$ is not necessarily unique. In the numerical implementation, we choose the smallest positive $\lambda$ such that $G_{c}=\lambda I+G$ and $G_{e}=\lambda I$ are both positive semi-definite. 

The free-energy functional \reff{NONF} is discretized as 
\begin{equation}\label{SdisFE}
\begin{aligned}
F_{h}[c^l]=&\frac{1}{2}\left \|\sum^{M}_{m=1}z^{m}c^{m,l}+\rho_h^{f}\right \|^{2}_{\mathcal{L}^{-1}_{\kappa}}+\sum^{M}_{m=1}\left \langle c^{m,l},\log(vc^{m,l})-1 \right \rangle_{\Omega}\\
&\quad +\frac{1}{2}\sum^{M}_{n=1}\sum^{M}_{m=1}g^{mn}\langle c^{m,l},c^{n,l}\rangle_{\Omega}+\sum^{M}_{m=1}\frac{\sigma^{m}}{2}\left\|\nabla_{h}c^{m,l}\right\|^{2}_{2}.
\end{aligned}
\end{equation}

\section{The numerical properties}\label{s:Properties}
In this section, we show that the proposed numerical scheme~\reff{MCS} is uniquely solvable with desired properties in preserving positivity, mass conservation, and unconditional energy stability at the discrete level.

\subsection{Mass Conservation}
\begin{theorem}\label{t:MassCon}
The numerical concentration $c^{m,l}_{i,j,k}$ of the semi-implicit scheme~\reff{MCS} respects mass conservation, in the sense that the total concentration of each species remains constant in time, i.e.,
\begin{equation}\label{conserve2}
h^3\sum^{N}_{i,j,k=1}c^{m,l+1}_{i,j,k}=h^3\sum^{N}_{i,j,k=1}c^{m,l}_{i,j,k},  \quad l=0, \cdots, N_t-1.
\end{equation}
\end{theorem}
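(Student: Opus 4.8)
The plan is to exploit the conservative (divergence) form of the concentration update in the scheme~\reff{MCS}, so that the total mass change telescopes away under periodicity. First I would sum the first equation of~\reff{MCS} over all grid points $1\le i,j,k\le N$ and multiply by $h^3$. The left-hand side becomes
\[
h^3\sum_{i,j,k=1}^N \frac{c^{m,l+1}_{i,j,k}-c^{m,l}_{i,j,k}}{\Delta t} = \frac{1}{\Delta t}\left(h^3\sum_{i,j,k=1}^N c^{m,l+1}_{i,j,k} - h^3\sum_{i,j,k=1}^N c^{m,l}_{i,j,k}\right),
\]
so that the claim~\reff{conserve2} is equivalent to showing that the discrete total of the right-hand side vanishes, namely
\[
\epsilon^m \left\langle \nabla_h\cdot\left(\check{c}^{m,l}\nabla_h \mu^{m,l+1}\right), 1\right\rangle_\Omega = 0.
\]

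The crux is that summing any discrete divergence of a periodic flux against the constant grid function $1$ gives zero. I would verify this by discrete summation by parts: for the periodic edge flux $\vec{f}=\check{c}^{m,l}\nabla_h\mu^{m,l+1}\in\vec{\mathcal{E}}_{\mathrm{per}}$, one has
\[
\left\langle \nabla_h\cdot\vec{f}, 1\right\rangle_\Omega = -\left[\vec{f}, \nabla_h 1\right]_\Omega = 0,
\]
since $\nabla_h 1 \equiv 0$ and the summation-by-parts identity on the periodic torus carries no boundary contribution. Equivalently, one can argue directly by telescoping: for each fixed $j,k$,
\[
\sum_{i=1}^N d_x f^x_{i,j,k} = \frac{1}{h}\sum_{i=1}^N\left(f^x_{i+\frac12,j,k}-f^x_{i-\frac12,j,k}\right) = \frac{1}{h}\left(f^x_{N+\frac12,j,k}-f^x_{\frac12,j,k}\right) = 0,
\]
where the last equality uses the periodicity $f^x_{N+\frac12,j,k}=f^x_{\frac12,j,k}$ recorded via the boundary conventions~\reff{PBCs}; the analogous telescoping holds for the $d_y$ and $d_z$ contributions.

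Combining these observations yields $h^3\sum_{i,j,k} c^{m,l+1}_{i,j,k}=h^3\sum_{i,j,k} c^{m,l}_{i,j,k}$ for each species $m$ and each step $l$, which is precisely~\reff{conserve2}. I expect no genuine obstacle here: the statement is purely structural, depending only on the update being written as a discrete divergence together with the periodic boundary conditions. It is worth emphasizing in the write-up that mass conservation is independent of the particular choice of mobility $\check{c}^{m,l}$, of the form of the chemical potential $\mu^{m,l+1}$, and of the convex splitting $G=G_c-G_e$; only the conservative form and periodicity enter. The single point requiring minor care is the vanishing of the telescoped (boundary) terms, which is guaranteed by the periodic extension conventions.
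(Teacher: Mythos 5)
Your proposal is correct and follows essentially the same route as the paper: sum the conservative update over all grid points and use discrete summation by parts together with the periodic boundary conditions~\reff{PBCs} to kill the divergence term. Your write-up is simply a more detailed version (making the telescoping explicit) of the paper's two-line argument.
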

\begin{proof}
Summing both sides of the numerical scheme for concentrations over $i,j,k$ gives 
\begin{align*}
h^{3}\sum_{i,j,k=1}^{N}c_{i,j,k}^{m,l+1} -h^{3}\sum_{i,j,k=1}^{N}c_{i,j,k}^{m,l} = \Delta t \epsilon^{m} h^{3}\sum_{i,j,k=1}^{N} \nabla_{h}\cdot\left(\check{c}^{m,l}\nabla_{h}\mu_{i,j,k}^{m,l+1}\right)=0,
\end{align*}
where we have used the periodic boundary conditions~\reff{PBCs} and discrete summation by parts in the last step. This completes the proof of \reff{conserve2}.
\end{proof}
If these exists a solution to the numerical scheme~\reff{MCS}, it is indicated by Theorem~\ref{t:MassCon} that
\begin{equation}\label{AvgCon}
\overline{c^{m,0}}=  \overline{c^{m,1}} =\cdot\cdot\cdot =\overline{c^{m,N_t}}, \quad m =  1,2, \cdots, M.
\end{equation}
Therefore, the assumption~\reff{NeuInit} leads to a discrete version of the solvability condition: 
\[
\overline{\rho_h^f} + \sum_{m=1}^M z^m \overline{c^{m,l}} = 0, \quad  l = 0, 1, \cdots, N_t.
\]

\subsection{Positivity preservation}
To prove the positivity-preserving property of the proposed numerical scheme, we present the following lemma without giving its proof; cf.~\cite{CWWW_JCP2019}.

\begin{lemma}\label{PoiAssum}
Suppose that $\phi\in \mathring{\mathcal{C}}_{\mathrm{per}}$ and $\|\phi\|_{\infty}\leq C_1$, then we have the following estimate:
\begin{equation}\label{LEM}
\begin{aligned}
\|\mathcal{L}_{\mathcal{D}}^{-1}\phi\|_{\infty}\leq C_{3}:=C_{2}h^{-1/2}\mathcal{D}^{-1}_{0},
\end{aligned}
\end{equation}
where $\mathcal{D}_0$ is a positive lower bound of the coefficient function $\mathcal{D}(x)$, i.e.  $\mathcal{D}\geq \mathcal{D}_0 > 0$, and $C_2$ depends only on $C_1$ and $\Omega$.
\end{lemma}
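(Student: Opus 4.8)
The plan is to set $\varphi := \mathcal{L}^{-1}_{\mathcal{D}}\phi \in \mathring{\mathcal{C}}_{\mathrm{per}}$, the unique mean-zero periodic solution of $-\nabla_h\cdot(\mathcal{D}\nabla_h\varphi)=\phi$, and to control $\|\varphi\|_{\infty}$ in two stages. First I would establish an $H^1_h$ energy bound for $\varphi$ driven by $\|\phi\|_2$ (hence by $C_1$, $\Omega$, and $\mathcal{D}_0^{-1}$), and then I would pass from this $H^1_h$ bound to the $L^\infty$ bound by a genuinely discrete Sobolev-type inequality, which in three dimensions costs a single negative half-power of $h$.

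For the first stage, I would pair the defining equation with $\varphi$ and use discrete summation by parts together with the identity $\langle\phi,\varphi\rangle_{\Omega}=\langle\phi,\phi\rangle_{\mathcal{L}^{-1}_{\mathcal{D}}}=[\mathcal{D}\nabla_h\varphi,\nabla_h\varphi]_{\Omega}$. Since $\mathcal{D}\geq\mathcal{D}_0>0$ pointwise, coercivity gives $[\mathcal{D}\nabla_h\varphi,\nabla_h\varphi]_{\Omega}\geq\mathcal{D}_0\|\nabla_h\varphi\|_2^2$. Combining this with Cauchy--Schwarz, $\langle\phi,\varphi\rangle_{\Omega}\leq\|\phi\|_2\|\varphi\|_2$, and the discrete Poincar\'e inequality $\|\varphi\|_2\leq C_P\|\nabla_h\varphi\|_2$ (available because $\bar\varphi=0$), I obtain $\|\nabla_h\varphi\|_2\leq C_P\mathcal{D}_0^{-1}\|\phi\|_2$ and hence $\|\varphi\|_{H^1_h}\leq C\mathcal{D}_0^{-1}\|\phi\|_2$ with an $h$-independent $C$. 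Finally $\|\phi\|_2\leq|\Omega|^{1/2}\|\phi\|_{\infty}\leq|\Omega|^{1/2}C_1$, so $\|\varphi\|_{H^1_h}\leq C(\Omega,C_1)\,\mathcal{D}_0^{-1}$.

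For the second stage, I would invoke the three-dimensional discrete Sobolev embedding $\|\varphi\|_{\infty}\leq C\,h^{-1/2}\|\varphi\|_{H^1_h}$ for $\varphi\in\mathcal{C}_{\mathrm{per}}$. The negative half-power of $h$ is unavoidable and sharp: a grid function equal to $1$ at a single node and $0$ elsewhere has $\|\varphi\|_{\infty}=1$ while $\|\varphi\|_{H^1_h}\sim h^{1/2}$, exactly reproducing the $h^{-1/2}$ loss and reflecting the failure of the continuous embedding $H^1\not\hookrightarrow L^\infty$ in three dimensions. Composing the two stages yields $\|\mathcal{L}^{-1}_{\mathcal{D}}\phi\|_{\infty}=\|\varphi\|_{\infty}\leq C\,h^{-1/2}\|\varphi\|_{H^1_h}\leq C_2\,h^{-1/2}\mathcal{D}_0^{-1}$, with $C_2=C|\Omega|^{1/2}C_1$ depending only on $C_1$ and $\Omega$, which is precisely \reff{LEM}.

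The main obstacle is the second stage: establishing the sharp three-dimensional discrete Sobolev inequality with the precise $h^{-1/2}$ exponent. The Poincar\'e estimate of the first stage is routine, but this embedding has no continuous counterpart and must be proved by discrete means---for instance a discrete Gagliardo--Nirenberg argument or summation along grid lines combined with interpolation---while tracking the $h$-dependence carefully so as to avoid the cruder $h^{-3/2}$ bound that a naive $L^2\!\to\!L^\infty$ inverse inequality would produce. As indicated in the statement, the detailed verification of this inequality is the content carried out in the cited reference~\cite{CWWW_JCP2019}.
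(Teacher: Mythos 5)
The paper states this lemma without proof, deferring entirely to~\cite{CWWW_JCP2019}, and your two-stage argument---the coercivity-plus-discrete-Poincar\'e energy estimate yielding $\|\varphi\|_{H^1_h}\leq C(\Omega,C_1)\,\mathcal{D}_0^{-1}$, followed by the three-dimensional discrete Sobolev inequality $\|\varphi\|_{\infty}\leq C h^{-1/2}\|\varphi\|_{H^1_h}$ (e.g.\ via $H^1_h\hookrightarrow \ell^6$ and the inverse inequality $\|\cdot\|_\infty\leq Ch^{-1/2}\|\cdot\|_6$)---is exactly the standard route taken in that reference, correctly adapted to the variable coefficient $\mathcal{D}$. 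Your single-node example confirming that the $h^{-1/2}$ loss is sharp is a useful addition; the one ingredient you leave unverified, the discrete embedding itself, is precisely the part the authors also outsource to the citation, so the proposal is consistent with the paper.
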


\begin{theorem}\label{POSitive}
Assume that $c^{m,l}\in \mathcal{C}_{\mathrm{per}}$, and $c^{m,l}>0$ with $\|c^{m,l}\|_{\infty}\leq M_{1}$ for some $M_{1}>0$. Define $C^{m,l}_0:=\underset{1\leq i,j,k\leq N}{\operatorname{min}}c^{m,l}_{i,j,k}$, so that $c^{m,l} \geq C^{m,l}_0>0$. There exists a unique solution $c^{m,l+1}\in\mathcal{C}_{\mathrm{per}}$ to the nonlinear scheme \reff{MCS} with $c^{m,l+1}>0$ at a point-wise level.
\end{theorem}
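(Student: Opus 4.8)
The plan is to recast the nonlinear scheme \reff{MCS} as the Euler--Lagrange equation of a strictly convex minimization problem, and then to exploit the singularity of the logarithmic entropy to force the minimizer into the interior of the feasible set. First I would eliminate $\psi^{l+1}$ using the discrete Poisson equation, writing $\psi^{l+1}=\mathcal{L}^{-1}_{\kappa}\bigl(\sum_{m}z^{m}c^{m,l+1}+\rho_h^{f}\bigr)$; this is legitimate because, on the mass-conserving subspace, $\overline{c^m}=\overline{c^{m,l}}$ together with \reff{AvgCon} and \reff{NeuInit} gives $\overline{\rho_h^f}+\sum_m z^m\overline{c^m}=0$, so $\sum_m z^m c^m+\rho_h^f\in\mathring{\mathcal{C}}_{\mathrm{per}}$. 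The electrostatic coupling then becomes the convex quadratic $\tfrac12\|\sum_m z^m c^m+\rho_h^f\|^2_{\mathcal{L}^{-1}_{\kappa}}$, and I would minimize
\[
\mathcal{J}(c)=\sum_{m=1}^{M}\frac{1}{2\epsilon^{m}\Delta t}\|c^{m}-c^{m,l}\|^{2}_{\mathcal{L}^{-1}_{\check{c}^{m,l}}}+\frac12\Bigl\|\sum_{m=1}^{M}z^{m}c^{m}+\rho_h^{f}\Bigr\|^{2}_{\mathcal{L}^{-1}_{\kappa}}+\sum_{m=1}^{M}\langle c^{m},\log(vc^{m})-1\rangle_{\Omega}+\frac12\sum_{m,n=1}^{M}g_{c}^{mn}\langle c^{m},c^{n}\rangle_{\Omega}+\sum_{m=1}^{M}\frac{\sigma^{m}}{2}\|\nabla_{h}c^{m}\|_{2}^{2}-\sum_{m,n=1}^{M}g_{e}^{mn}\langle c^{m},c^{n,l}\rangle_{\Omega}
\]
over the feasible set $A_h:=\{c=(c^1,\dots,c^M):c^m>0,\ \overline{c^m}=\overline{c^{m,l}},\ m=1,\dots,M\}$. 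The mass constraint $\overline{c^m}=\overline{c^{m,l}}$ is precisely what makes $c^m-c^{m,l}\in\mathring{\mathcal{C}}_{\mathrm{per}}$, so each $\mathcal{L}^{-1}_{\check{c}^{m,l}}$ term is well-defined; moreover $\check{c}^{m,l}\geq C^{m,l}_0>0$ since $c^{m,l}>0$, so $\mathcal{L}_{\check{c}^{m,l}}$ is positive definite. A direct variational computation, in which the constant Lagrange multipliers for the mass constraints are harmless because only $\nabla_h\mu^{m,l+1}$ enters the flux, shows that the optimality conditions for $\mathcal{J}$ on $A_h$ reproduce \reff{MCS} exactly.

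I would then verify strict convexity of $\mathcal{J}$. The two $\mathcal{L}^{-1}$ quadratics and the gradient-energy terms are convex because $\mathcal{L}^{-1}_{\check{c}^{m,l}}$, $\mathcal{L}^{-1}_{\kappa}$ and $-\Delta_h$ are symmetric positive (semi-)definite; the steric term is convex since $G_c$ is positive semi-definite; the $G_e$ term is linear; and the entropy $\sum_m\langle c^m,\log(vc^m)-1\rangle_\Omega$ is strictly convex because $s\mapsto s\log s$ is strictly convex. As the strictly convex entropy blocks cover all the unknowns $c^1,\dots,c^M$, the full Hessian is positive definite, so $\mathcal{J}$ is strictly convex; this yields uniqueness once existence and interiority are established.

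For existence I would pass to the closure $\overline{A_h}$, obtained by relaxing $c^m>0$ to $c^m\geq0$. This set is convex, closed, and bounded (hence compact) in finite dimensions, and $\mathcal{J}$ extends continuously to it since $s\log s\to0$ as $s\to0^{+}$; thus a minimizer $c^{\star}=(c^{\star,1},\dots,c^{\star,M})\in\overline{A_h}$ exists. The crux of the argument, and the main obstacle, is to rule out the boundary. Suppose for contradiction that $c^{\star,m}_{p_0}=0$ at some grid point $p_0$; since $\overline{c^{\star,m}}=\overline{c^{m,l}}>0$, there is a point $p_1$ with $c^{\star,m}_{p_1}>0$. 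Taking $d$ equal to $+1$ at $p_0$, $-1$ at $p_1$, and $0$ elsewhere preserves the mass constraint and keeps $c^{\star}+sd\in\overline{A_h}$ for small $s>0$. The directional derivative $\tfrac{d}{ds}\mathcal{J}(c^{\star}+sd)\big|_{s\to0^{+}}$ contains the entropy contribution $\log(vs)$ from $p_0$, which diverges to $-\infty$, while every other contribution stays bounded: the quadratic and gradient terms are smooth, and the nonlocal term contributes $\tfrac{1}{\epsilon^{m}\Delta t}\langle d,\mathcal{L}^{-1}_{\check{c}^{m,l}}(c^{\star,m}-c^{m,l})\rangle_\Omega$, whose size is governed by $\|\mathcal{L}^{-1}_{\check{c}^{m,l}}(c^{\star,m}-c^{m,l})\|_{\infty}$. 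This is exactly where Lemma~\ref{PoiAssum} enters: since $\|c^{\star,m}-c^{m,l}\|_{\infty}$ is bounded and $\check{c}^{m,l}\geq C^{m,l}_0>0$, the lemma provides the finite bound $C_2 h^{-1/2}(C^{m,l}_0)^{-1}$. Hence the directional derivative tends to $-\infty$, so $\mathcal{J}$ strictly decreases toward the interior, contradicting minimality at $c^{\star}$. Therefore $c^{\star}>0$ pointwise; being an interior critical point of the strictly convex $\mathcal{J}$, it is the unique minimizer, and recovering $\psi^{l+1}$ and $\mu^{m,l+1}$ from the optimality conditions yields the unique positive solution $c^{m,l+1}=c^{\star,m}$ of \reff{MCS}. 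The delicate point throughout is ensuring that the singular entropy genuinely dominates the nonlocal $\mathcal{L}^{-1}$ term near the boundary, which rests on the uniform $\infty$-bound of Lemma~\ref{PoiAssum}.
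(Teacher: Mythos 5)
Your proof is correct and follows essentially the same strategy as the paper: it recasts the scheme as minimization of the same strictly convex functional over the mass-constrained positive cone, uses the $-\infty$ divergence of the logarithmic entropy's one-sided directional derivative to force the minimizer off the boundary, and controls the nonlocal $\mathcal{L}^{-1}$ terms via Lemma~\ref{PoiAssum}, with strict convexity giving uniqueness. The only technical difference is that you minimize over the compact closure $\{c^m\ge 0,\ \overline{c^m}=\overline{c^{m,l}}\}$ and argue at a single vanishing grid point, whereas the paper works on the shrinking compact subsets $A_{h,\delta}$ and compares the global minimum and maximum grid points (exploiting the sign of $\Delta_h$ there); both routes are sound.
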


\begin{proof}
The numerical solution of the nonlinear scheme \reff{MCS} corresponds to the minimizer of the discrete energy functional
\[
\begin{aligned}
\mathcal{J}^{k}(u)=&\frac{1}{2\triangle{t}}\sum^{M}_{m=1}\|u^{m}-c^{m,l}\|^{2}_{\mathcal{L}^{-1}_{\check{c}^{m,l}}}+
\frac{1}{2}\|\sum^{M}_{m=1}z^{m}u^{m}+\rho_h^{f}\|^{2}_{\mathcal{L}^{-1}_{\kappa}}
+\sum^{M}_{m=1}\left\langle u^{m},\log u^{m}-1\right\rangle_{\Omega}\\
&+\frac{1}{2}\sum^{M}_{n=1}\sum^{M}_{m=1}g^{mn}_{c}\langle u^{m}, u^{n}\rangle_{\Omega}+\sum^{M}_{m=1}\frac{\sigma^{m}}{2}\|\nabla_{h}u^{m}\|^{2}_{2}-\sum^{M}_{n=1}\sum^{M}_{m=1}g^{mn}_{e}\langle u^{m}, c^{n,k}\rangle_{\Omega},
\end{aligned}
\]
over the admissible set
$$A_{h}:=\{u^{m}\in \mathcal{C}_{\mathrm{per}}| 0 < u^{m} < \theta^m, ~ \overline{u^{m}}=\overline{c^{m,0}}~ \text{for}~ 1\leq m \leq M \}\subset\mathbb{R}^{MN^{3}},$$ where $\theta^m=\frac{|\Omega| \overline{c^{m,0}}}{h^3}$. 
It is easy to verify that $\mathcal{J}^{k}$ is strictly convex over this domain. We shall prove that the minimizer of $\mathcal{J}^{k}$ exists in $A_{h}$ and is positive on each grid point. We first consider a closed subset
$$A_{h, \delta}:=\{u^{m}\in \mathcal{C}_{\mathrm{per}}|\delta \leq u^{m} \leq  \theta^m -\delta, ~ \overline{u^{m}}=\overline{c^{m,0}}~ \text{for}~ 1\leq m \leq M\},$$
where $\delta$ is a number in $(0, \theta^m/2)$. 
Since $A_{h, \delta}$ is a bounded, compact, and convex subset in $\mathcal{C}_{\mathrm{per}}$, there exists a unique minimizer of $\mathcal{J}^{k}$ over $A_{h, \delta}$. Let the minimizer be $u^{\ast}=(u^{1, \ast}, u^{2, \ast}, \cdots, u^{M, \ast})$. When $\delta$ is sufficiently small, $u^{\ast}$ could not reach the lower boundary of $A_{h, \delta}$.  We shall prove this by contradiction. 

Suppose that there exists one grid point $\vec{\alpha}_{0}=(i_{0}, j_{0}, k_{0})$ and $m_0$ such that  $u^{m_0,\ast}$ achieves its global minimum at $\vec{\alpha}_{0}$ with $u^{m_0,\ast}_{\vec{\alpha}_{0}}=\delta$. Suppose that $\vec{\alpha}_{1}=(i_{1}, j_{1}, k_{1})$ is another grid point at which $u^{m_0,\ast}$ achieves its global maximum. Obviously, we have
$$\overline{c^{m_0,0}} \leq u^{m_0,\ast}_{\vec{\alpha}_{1}}\leq \theta^{m_0}-\delta.$$
Since $\mathcal{J}^{k}$ is smooth over $A_{h}$ and $u^{\ast}\in A_{h, \delta}$, the following directional derivative is well defined for sufficiently small $s$:
\[
\begin{aligned}
&\underset{s\to 0+}{\operatorname{\lim}}\frac{\mathcal{J}^{k}(u^{\ast}+s\phi)- \mathcal{J}^{k}(u^{\ast})}{s} \\
&\quad=\frac{1}{\triangle{t}}\left\langle\mathcal{L}^{-1}_{\check{c}^{m_0,l}}\left(u^{m_0,\ast}-c^{m_0,l}\right), \phi^{m_0}\right\rangle_{\Omega}+\left\langle z^{m_0}\mathcal{L}^{-1}_{\kappa}\left(\sum^{M}_{m=1}z^{m}u^{m,\ast}+\rho_h^{f}\right), \phi^{m_0}\right\rangle_{\Omega}\\
&\quad-\sigma^{m_0}\left\langle\Delta_{h}u^{m_0,\ast}, \phi^{m_0}\right\rangle_{\Omega}
+\langle \log u^{m_0,\ast}, \phi^{m_0}\rangle_{\Omega}+\sum^{M}_{n=1}g^{m_0n}_{c}\langle u^{n,\ast},\phi^{m_0}\rangle_{\Omega}-\sum^{M}_{n=1}g^{m_0n}_{e}\langle c^{n,l},\phi^{m_0}\rangle_{\Omega},
\end{aligned}
\]
where $\phi=(0, \cdots, \phi^{m_0}, \cdots, 0)$.
If we choose the direction
$$\phi_{i,j,k}^{m_0}=\delta_{i,i_{0}}\delta_{j,j_{0}}\delta_{k,k_{0}}-\delta_{i,i_{1}}\delta_{j,j_{1}}\delta_{k,k_{1}}\in \mathring{\mathcal{C}}_{\mathrm{per}},$$
then the directional  derivative becomes
\begin{equation}\label{DERVA}
\begin{aligned}
\frac{1}{h^{3}}\underset{s\to 0+}{\operatorname{\lim}}\frac{\mathcal{J}^{k}(u^{\ast}+s\phi)- \mathcal{J}^{k}(u^{\ast})}{s}&=\frac{1}{\triangle{t}}\mathcal{L}^{-1}_{\check{c}^{m_0,l}}(u^{m_0,\ast}-c^{m_0,l})_{\vec{\alpha}_{0}}
-\frac{1}{\triangle{t}}\mathcal{L}^{-1}_{\check{c}^{m_0,l}}(u^{m_0,\ast}-c^{m_0,l})_{\vec{\alpha}_{1}}\\
&+z^{m_0}\mathcal{L}^{-1}_{\kappa}\left(\sum^{M}_{m=1}z^{m}u^{m,\ast}+\rho_h^{f}\right)_{\vec{\alpha}_{0}}-z^{m_0}\mathcal{L}^{-1}_{\kappa}\left(\sum^{M}_{m=1}z^{m}u^{m,\ast}+\rho_h^{f}\right)_{\vec{\alpha}_{1}}\\
&-\sigma^{m_0}\left(\Delta_{h}u^{m_0,\ast}_{\vec{\alpha}_{0}}-\Delta_{h}u^{m_0,\ast}_{\vec{\alpha}_{1}}\right)
+\log u^{m_0,\ast}_{\vec{\alpha}_{0}}-\log u^{m_0,\ast}_{\vec{\alpha}_{1}}\\
&+\sum^{M}_{n=1}g^{m_0n}_{c}u^{n,\ast}_{\vec{\alpha}_{0}}-\sum^{M}_{n=1}g^{m_0n}_{c}u^{n,\ast}_{\vec{\alpha}_{1}}-\sum^{M}_{n=1}g^{m_0n}_{e}c^{n,l}_{\vec{\alpha}_{0}}+\sum^{M}_{n=1}g^{m_0n}_{e}c^{n,l}_{\vec{\alpha}_{1}}.
\end{aligned}
\end{equation}
Since $u^{m_0,\ast}_{\vec{\alpha}_{0}}=\delta$ and $u^{m_0,\ast}_{\vec{\alpha}_{1}}\geq\overline{c^{m,0}}$, we have
\begin{equation}\label{DER1}
\begin{aligned}
\log u^{m_0,\ast}_{\vec{\alpha}_{0}}-\log u^{m_0,\ast}_{\vec{\alpha}_{1}} \leq \log \delta-\log \overline{c^{m,0}}.
\end{aligned}
\end{equation}
Since $u^{m_0,\ast}$ takes a minimum at the grid point $\vec{\alpha}_{0}$ and a maximum at the grid point $\vec{\alpha}_{1}$, we obtain 
\begin{equation}\label{DER2}
\begin{aligned}
\Delta_{h}u^{m_0,\ast}_{\vec{\alpha}_{0}}\geq 0,\quad \Delta_{h}u^{m_0,\ast}_{\vec{\alpha}_{1}}\leq 0.
\end{aligned}
\end{equation}
It follows from $g^{mn}_{c}>0$, $g^{mn}_{e}>0$, $u^{n,\ast}>0$, and $c^{n,l}>0$ that
\begin{equation}\label{DER3}
-\sum^{M}_{n=1}g^{m_0n}_{c}u^{n,\ast}_{\vec{\alpha}_{1}}-\sum^{M}_{n=1}g^{m_0n}_{e}c^{n,l}_{\vec{\alpha}_{0}} <0.
\end{equation}
Since $u^{n,\ast}_{\vec{\alpha}_{0}}\leq \theta^n- \delta$, we have 
 \begin{equation}\label{DER4}
 \sum^{M}_{n=1}g^{m_0n}_{c}u^{n,\ast}_{\vec{\alpha}_{0}} \leq \sum^{M}_{n=1}g^{m_0n}_{c} \left(\theta^n- \delta\right)  \leq \sum^{M}_{n=1}g^{m_0n}_{c} \theta^n.
 \end{equation}
Also, the \emph{a priori} assumption $\|c^{n,l}\|_{\infty}\leq M_{1}$ indicates that
 \begin{equation}\label{DER5}
\begin{aligned}
\sum^{M}_{n=1}g^{m_0n}_{e}c^{n,l}_{\vec{\alpha}_{1}} \leq M_{1} \sum^{M}_{n=1}g^{m_0n}_{e}.
\end{aligned}
\end{equation}
Since $\check{c}^{m_0,l} \geq C_0^{m_0,l}>0$ and $\kappa \geq \kappa_0>0$ (cf.~\reff{kappa_0}), we have by applying the Lemma \ref{PoiAssum} with $\mathcal{D}=\check{c}^{m_0,l}$ and $\mathcal{D}=\kappa$ that 
\begin{equation}\label{DER6}
\begin{aligned}
 \mathcal{L}^{-1}_{\check{c}^{m_0,l}}(u^{m_0,\ast}-c^{m_0,l})_{\vec{\alpha}_{0}}
- \mathcal{L}^{-1}_{\check{c}^{m_0,l}}(u^{m_0,\ast}-c^{m_0,l})_{\vec{\alpha}_{1}}\leq 2C_{3}^c,
\end{aligned}
\end{equation}
and 
\begin{equation}\label{DER7}
\begin{aligned}
z^{m_0}\mathcal{L}^{-1}_{\kappa}\left(\sum^{M}_{m=1}z^{m}u^{m,\ast}+\rho_h^{f}\right)_{\vec{\alpha}_{0}}-z^{m_0}\mathcal{L}^{-1}_{\kappa}\left(\sum^{M}_{m=1}z^{m}u^{m,\ast}+\rho_h^{f}\right)_{\vec{\alpha}_{1}} \leq 2\left|z^{m_0}\right|C_{3}^{\kappa},
\end{aligned}
\end{equation}
respectively.  Note that we have used the assumption that $\check{c}^{m_0,l}\geq C^{m_0,l}_0>0$ and $\kappa\geq \kappa_0>0$; cf.~\reff{kappa_0}. The constant $C_{3}^c$ depends on $\theta^{m_0}$, $M_1$, $\Omega$, $h$, and $C^{m_0,l}_0$; and the constant $C_{3}^{\kappa}$ depends on $\text{max}_{1\leq m \leq M} \{|z^m| \theta^{m}\}$, $\|\rho_h^f\|_{\infty}$, $\Omega$, $h$, and $\kappa_0$.
Thus, a substitution of \reff{DER1}$-$\reff{DER7} into \reff{DERVA} leads to 
\begin{equation}\label{DERi}
\begin{aligned}
&\frac{1}{h^{3}}\underset{s\to 0+}{\operatorname{\lim}}\frac{\mathcal{J}^{k}(u^{\ast}+s\phi)- \mathcal{J}^{k}(u^{\ast})}{s} \\
&\qquad\leq  \log\delta-\log\overline{c^{m,0}}+ \sum^{M}_{n=1}g^{m_0n}_{c} \theta^n  + M_{1} \sum^{M}_{n=1}g^{m_0n}_{e}  +2C_{3}^{c}\triangle{t}^{-1}+2\left|z^{m_0}\right|C_{3}^{\kappa}. 
\end{aligned}
\end{equation}
For any fixed $\triangle{t}$ and $h$, we may choose $\delta$ sufficiently small so that
\begin{equation}\label{DER7}
\begin{aligned}
\log\delta-\log\overline{c^{m,0}}+ \sum^{M}_{n=1}g^{m_0n}_{c} \theta^n  + M_{1} \sum^{M}_{n=1}g^{m_0n}_{e}  +2C_{3}^{c}\triangle{t}^{-1}+2\left|z^{m_0}\right|C_{3}^{\kappa}<0.
\end{aligned}
\end{equation}
That is 
\begin{equation}\label{finaDER}
\begin{aligned}
\underset{s\to 0+}{\operatorname{\lim}}\frac{\mathcal{J}^{k}(u^{\ast}+s\phi)- \mathcal{J}^{k}(u^{\ast})}{s}<0.
\end{aligned}
\end{equation}
This is contradictory to the assumption that $u^{\ast}$ is the minimizer of $\mathcal{J}^{k}$, since the direction $\phi$ we chose points into the interior of $A_{h,\delta}$.  

Since the numerical solution of each species conserves at each time step,  one point of $u^{\ast}$ approaching $\theta^m -\delta$ implies many points of $u^{\ast}$ going to zero, when $\delta$ is sufficiently small. Thus, we can analogously show that the $u^{\ast}$ cannot reach the upper boundary of $A_{h,\delta}$. 

Therefore, when $\delta$ is sufficiently small, the global minimum of $\mathcal{J}^{k}$ over $A_{h,\delta}$ can only be achieved at an interior point of $A_{h,\delta}$, which is a subset of $A_{h}$. This establishes the existence of a positive numerical solution to the nonlinear scheme~\reff{MCS}.  In addition, the strict convexity of $\mathcal{J}^{k}$ over $A_{h}$ implies the uniqueness of the numerical solution. The proof of Theorem \ref{POSitive} is complete.
\end{proof}

\subsection{Unconditional energy stability}
\begin{theorem}
The semi-implicit discrete scheme~\reff{MCS} is energy stable, in the sense that
\[
\begin{aligned}
F_{h}(c^{l+1})\leq F_{h}(c^{l}).
\end{aligned}
\]
\end{theorem}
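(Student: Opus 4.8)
The plan is to exploit the convex--concave splitting that is built into the scheme through the decomposition $G = G_c - G_e$. First I would write the discrete free energy as $F_h = F_c - F_e$, where $F_c$ collects the electrostatic term $\tfrac{1}{2}\|\sum_m z^m c^m + \rho_h^f\|^2_{\mathcal{L}^{-1}_\kappa}$, the entropic term, the gradient energy, and the quadratic form $\tfrac{1}{2}\sum_{m,n} g_c^{mn}\langle c^m, c^n\rangle_\Omega$, while $F_e$ is the quadratic form $\tfrac{1}{2}\sum_{m,n} g_e^{mn}\langle c^m, c^n\rangle_\Omega$. Since $G_c$ and $G_e$ are positive semi-definite and $\mathcal{L}^{-1}_\kappa$ is self-adjoint and positive definite on $\mathring{\mathcal{C}}_{\mathrm{per}}$ (which is where the charge density lives, by the discrete neutrality condition following Theorem~\ref{t:MassCon}), both $F_c$ and $F_e$ are convex on the admissible set.

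Next I would record the two tangent-plane inequalities. Convexity of $F_c$ evaluated at $c^{l+1}$ and $c^l$ gives $F_c(c^{l+1}) - F_c(c^l) \leq \langle \nabla F_c(c^{l+1}), c^{l+1} - c^l\rangle$, and convexity of $F_e$ at $c^l$ gives $F_e(c^{l+1}) - F_e(c^l) \geq \langle\nabla F_e(c^l), c^{l+1} - c^l\rangle$. Subtracting yields $F_h(c^{l+1}) - F_h(c^l) \leq \langle \nabla F_c(c^{l+1}) - \nabla F_e(c^l),\, c^{l+1} - c^l\rangle$. The central observation is that, componentwise, $[\nabla F_c(c^{l+1})]^m - [\nabla F_e(c^l)]^m = \mu^{m,l+1} + \log v$: the electrostatic variation yields $z^m\psi^{l+1}$, the entropy yields $\log(v c^{m,l+1})$, the $G_c$ form yields $\sum_n g_c^{mn} c^{n,l+1}$, the gradient energy yields $-\sigma^m \Delta_h c^{m,l+1}$ by discrete summation by parts, and $\nabla F_e$ yields the explicit term $\sum_n g_e^{mn} c^{n,l}$.

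I would then remove the constant $\log v$ using mass conservation: since $\overline{c^{m,l+1}} = \overline{c^{m,l}}$ by Theorem~\ref{t:MassCon}, we have $\langle \log v,\, c^{m,l+1} - c^{m,l}\rangle_\Omega = 0$, so the right-hand side reduces to $\sum_m \langle\mu^{m,l+1},\, c^{m,l+1} - c^{m,l}\rangle_\Omega$. Substituting the first equation of the scheme, $c^{m,l+1} - c^{m,l} = \Delta t\,\epsilon^m \nabla_h\cdot(\check{c}^{m,l}\nabla_h\mu^{m,l+1})$, and applying discrete summation by parts under periodic boundary conditions gives $\langle\mu^{m,l+1}, c^{m,l+1} - c^{m,l}\rangle_\Omega = -\Delta t\,\epsilon^m [\check{c}^{m,l}\nabla_h\mu^{m,l+1}, \nabla_h\mu^{m,l+1}]_\Omega$. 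Because Theorem~\ref{POSitive} guarantees $c^{m,l} > 0$, the averaged mobility $\check{c}^{m,l}$ is positive, so each such term is nonpositive and $F_h(c^{l+1}) - F_h(c^l) \leq 0$ follows.

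The hard part will not be the summation-by-parts bookkeeping but the careful verification that the variational derivative of the discrete electrostatic energy reproduces exactly $z^m\psi^{l+1}$. This requires differentiating the quadratic functional $\tfrac{1}{2}\|\sum_m z^m c^m + \rho_h^f\|^2_{\mathcal{L}^{-1}_\kappa}$ of $c$ using the self-adjointness of $\mathcal{L}^{-1}_\kappa$ and identifying $\psi^{l+1} = \mathcal{L}^{-1}_\kappa(\sum_m z^m c^{m,l+1} + \rho_h^f)$ as the solution of the third equation of \reff{MCS}; one must also confirm the operator is well-defined via the discrete neutrality condition. Once these identifications are secured, the convexity of the electrostatic form is immediate from the positive semi-definiteness of $\mathcal{L}^{-1}_\kappa$, and the remaining estimates are routine.
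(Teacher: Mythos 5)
Your proposal is correct and follows essentially the same route as the paper: the paper's proof likewise observes that the electrostatic, entropic, gradient, $G_c$, and $G_e$ contributions are each convex, invokes the convex--concave (tangent-plane) inequalities together with mass conservation to bound $F_h(c^{l+1})-F_h(c^l)$ by $\sum_m\langle\mu^{m,l+1},\,c^{m,l+1}-c^{m,l}\rangle_\Omega$, and then substitutes the scheme and sums by parts. The only difference is presentational: you make explicit the cancellation of the additive constant $\log v$ via $\overline{c^{m,l+1}}=\overline{c^{m,l}}$ and the identification $\psi^{l+1}=\mathcal{L}^{-1}_\kappa\bigl(\sum_m z^m c^{m,l+1}+\rho_h^f\bigr)$, which the paper leaves implicit.
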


\begin{proof}
Since $\mathcal{L}^{-1}_{\kappa}$ is symmetric, positive definite on the space $\mathring{\mathcal{C}}_{\mathrm{per}}$, we know that the term $\frac{1}{2}\left\|\sum^{M}_{m=1}z^{m}c^{m}+\rho_h^{f}\right\|^{2}_{\mathcal{L}^{-1}_{\kappa}}$ is convex with respect to $c^m$. A direct calculation reveals that the term 
\[
\sum^{M}_{m=1}\left \langle c^{m},\log(vc^{m})-1\right\rangle_{\Omega}+\sum^{M}_{m=1}\frac{\sigma^{m}}{2}\left \|\nabla_{h}c^{m}\right \|^{2}_{2}
\]
is convex as well. We know  by the positive semi-definiteness of the matrices $G_c$ and $G_e$ that 
\[
  \frac{1}{2}\sum^{M}_{n=1}\sum^{M}_{m=1}g^{mn}_{c}\left \langle c^{m},c^{n}\right \rangle_{\Omega}~~\text{and }~ \frac{1}{2}\sum^{M}_{n=1}\sum^{M}_{m=1}g^{mn}_{e}\left \langle c^{m},c^{n}\right\rangle_{\Omega}  \, \, \, 
\mbox{are convex} . 
\]
Therefore, by the convexity of these terms and mass conservation~\reff{AvgCon}, we arrive at   
\[
\begin{aligned}
F_{h}(c^{l+1})-F_{h}(c^{l})
\leq&\sum^{M}_{m=1}\left \langle z^{m}\psi^{l+1}+\log c^{m,l+1}+\sum^{M}_{n=1}g^{mn}_{c}c^{n,l+1}-\sigma^{m}\Delta _{h}c^{m,l+1}\right.\\
&\left.\quad-\sum^{N}_{n=1}g^{mn}_{e} c^{n,l},c^{m,l+1}-c^{m,l}\right \rangle_{\Omega}\\
=&\sum^{M}_{m=1}\left \langle\mu^{m,l+1},\Delta{t}\epsilon^{m}\nabla_{h}\cdot\left(\check{c}^{m,l}\nabla_{h}\mu^{m,l+1}\right) \right \rangle_{\Omega}\\
=&- \sum^{M}_{m=1} \Delta{t} \epsilon^{m}  \left  [ \nabla_{h} \mu^{m,l+1},  \check{c}^{m,l}\nabla_{h}\mu^{m,l+1}  \right ]_{\Omega} \leq 0,
\end{aligned}
\]
in which the periodic boundary conditions~\reff{PBCs} and discrete summation by parts formulas have been used. This completes the proof.
\end{proof}

\section{Numerical examples}\label{s:Numerics}
At each time step evolution, we numerically solve the nonlinear difference equation~\reff{MCS} supplemented with periodic boundary conditions~\reff{PBCs} using the Newton's iterations. The Newton's iterations with ion concentrations, chemical potentials, and the electrostatic potential as unknowns converge robustly within four stages in our extensive numerical tests. For simplicity, we consider a periodic charged system consists of concentrated binary mononvalent electrolytes and fixed charges.   Unless stated otherwise, we take the characteristic concentration $c_{0}=1$ M, characteristic length $L=1$ nm,  characteristic diffusion constant $D_{0}=1$ nm$^{2}$/ns, and diffusion constants $D^{1}=D^{2}=1$ nm$^{2}$/ns for two species of ions. We consider a uniform dielectric constant $\ve_r=78$, which prescribes $\lambda_D = 0.304$ nm, $\kappa = 0.185$, and $\epsilon^1=\epsilon^2=0.304$.

\subsection{Accuracy test}
We test the numerical convergence order of the proposed numerical scheme~\reff{MCS} in one dimension. To obtain a reference solution for comparison, we construct an exact solution
\begin{equation}\label{exact1D}
\left\{
\begin{aligned}
&c^{1}=0.1e^{-t}\cos(\pi x)+0.2,\\
&c^{2}=0.1e^{-t}\cos(\pi x)+0.2,\\
&\psi=e^{-t}\cos(\pi x),
\end{aligned}\right.
\end{equation}
to the PNPCH equations
\begin{equation}\label{DCH}
\left\{
\begin{aligned}
&\frac{\partial c^{m}}{\partial t}=\epsilon^{m}\partial_x \left[c^{m}\partial_x
\left(z^{m}\psi+\log c^{m}+\sum^{M}_{n=1}g^{mn}c^{n}-\sigma^{m}\Delta c^{m}\right)\right] + f_m, \quad m =1, 2, \\
&- \partial_x (\kappa \partial_x \psi)=\sum^{2}_{m=1}z^{m}c^{m}+\rho^{f},
\end{aligned}\right.
\end{equation}
with periodic boundary conditions. 
Here the source terms $f_{1}$, $f_{2}$, and $\rho^{f}$, and the initial conditions are determined by the known exact solution~\reff{exact1D}. We choose the computational domain $\Omega=[-1, 1]$, and take the steric interaction coefficient matrix $
G=\left(
\begin{matrix}
3.6 & 2.6\\
2.6  & 0.2 \\
\end{matrix}
\right) $ and gradient energy coefficients $\sigma^{1}=\sigma^{2}=0.01.$ Note that the matrix $G$ is not positive semi-definite and therefore the corresponding free energy functional~\reff{NONF} is nonconvex. 

\begin{table}[htbp]
\centering
\begin{tabular}{ccccccc}
\hline \hline
 $N$ & $\ell^\infty$ error in $c^1$ & Order & $\ell^\infty$ error in $c^2$ & Order & $\ell^\infty$ error in $\psi$ & Order\\
 \hline
100 & 3.98e-5 & - & 3.94e-5 & - & 6.57e-4 & -\\
 200 & 9.97e-6 & 1.9963 & 9.87e-6 & 1.9969  & 1.64e-4 & 2.0003\\
 400 & 2.50e-6 & 1.9985 &  2.47e-6 & 1.9992  & 4.11e-5 & 2.0001\\
 800 & 6.24e-7 & 1.9993 &  6.17e-7 & 1.9998  & 1.03e-5 & 2.0001\\
 \hline \hline
\end{tabular}
\caption{The $\ell^\infty$ error and convergence order for numerical solutions of $c^1$, $c^2$, and $\psi$ at $T=0.0016$ with a mesh ratio $\Delta t=h^2$. }
\label{order1d}
\end{table}

We test numerical accuracy of the proposed scheme~\reff{MCS} on various meshes, in comparison with the exact solution~\reff{exact1D}. Notice that the mesh ratio, $\Delta t=h^{2}$, is chosen for the purpose of numerical accuracy test, rather than for the concern of numerical stability.    Table~\ref{order1d} lists the $\ell^\infty$ error and convergence order for the numerical solutions of ion concentrations and the electrostatic potential at time $T=0.0016$. It is observed that, the numerical error decreases as the mesh refines, and the convergence orders for ion concentrations and the potential are both around $2$, as expected. This confirms that the proposed numerical scheme~\reff{MCS} is second order accurate in space and first order accurate in time. 

%

%
We also test numerical accuracy of the proposed scheme~\reff{MCS}  in two dimensions. We take $\Omega=[-4, 4] \times [-4, 4]$ and the steric interaction coefficient matrix $
G=\left(
\begin{matrix}
2 & 1\\
1 & 2 \\
\end{matrix}
\right).$ 
The following exact solution is constructed 
\begin{equation}\label{exact2D}
\left\{
\begin{aligned}
&c^{1}=0.1e^{-20t}\cos(\pi x/4)\sin(\pi y/4)+1,\\
&c^{2}=0.1e^{-20t}\cos(\pi x/4)\sin(\pi y/4)+1,\\
&\psi=e^{-20t}\cos(\pi x)\sin(\pi y/4),
\end{aligned}\right.
\end{equation}
for the equations~\reff{DCH} with periodic boundary conditions. Again, the corresponding source terms and the initial conditions are determined by the known exact solution~\reff{exact2D}.  

\begin{table}[htbp]
\centering
\begin{tabular}{ccccccc}
\hline \hline
 $N$ & $\ell^\infty$ error in $c^1$ & Order & $\ell^\infty$ error in $c^2$ & Order & $\ell^\infty$ error in $\psi$ & Order\\
 \hline
20 & 3.39e-1 & - & 3.39e-1 & - & 1.24e-1 & -\\
40 & 8.38e-2 & 2.0158 & 8.38e-2 & 2.0158  & 2.78e-2 & 2.1549\\
60 & 3.70e-2 & 2.0162 & 3.70e-2 &  2.0162 & 1.21e-2 & 2.0515 \\
80 & 2.07e-2 & 2.0188 & 2.07e-2  &  2.0188 & 6.80e-3 & 2.0032\\
 \hline \hline
\end{tabular}
\caption{The $\ell^\infty$ error and convergence order for numerical solutions of $c^1$, $c^2$, and $\psi$ at $T=0.16$ with a mesh ratio $\Delta t=h^2$.}
\label{order2d}
\end{table}

Similarly, we carry out computations on various meshes with $\Delta t=h^2$ and compare with the exact solution~\reff{exact2D}. As shown in Table~\ref{order2d}, the numerical solutions of ion concentrations and the potential both converge to the exact solution with a convergence rate around $2$, indicating the anticipated accuracy order of the proposed numerical scheme~\reff{MCS} in the $2$D case. 

\subsection{Properties tests}
\begin{figure}[htbp]
\centering
\includegraphics[scale=0.52]{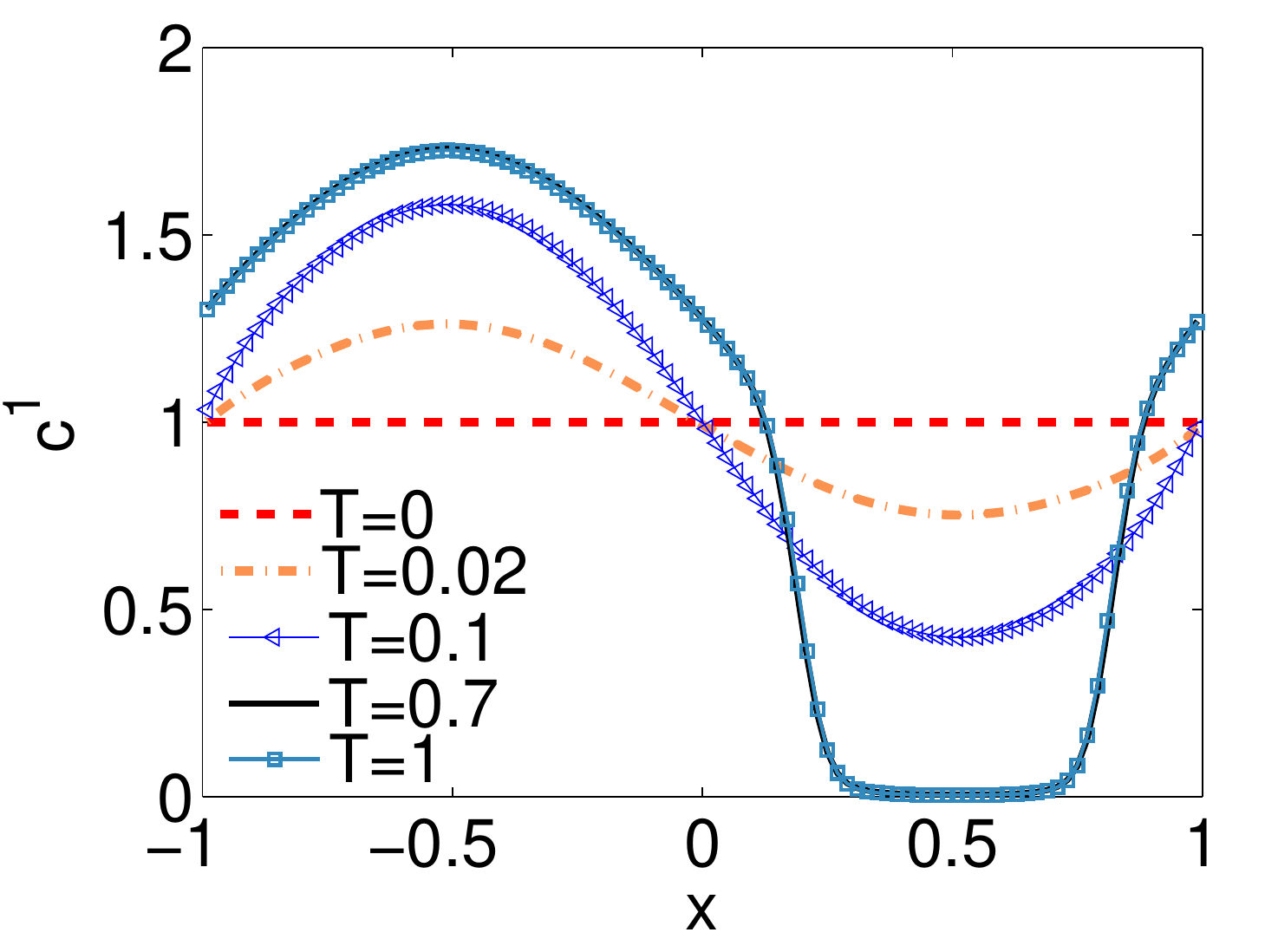}
\includegraphics[scale=0.52]{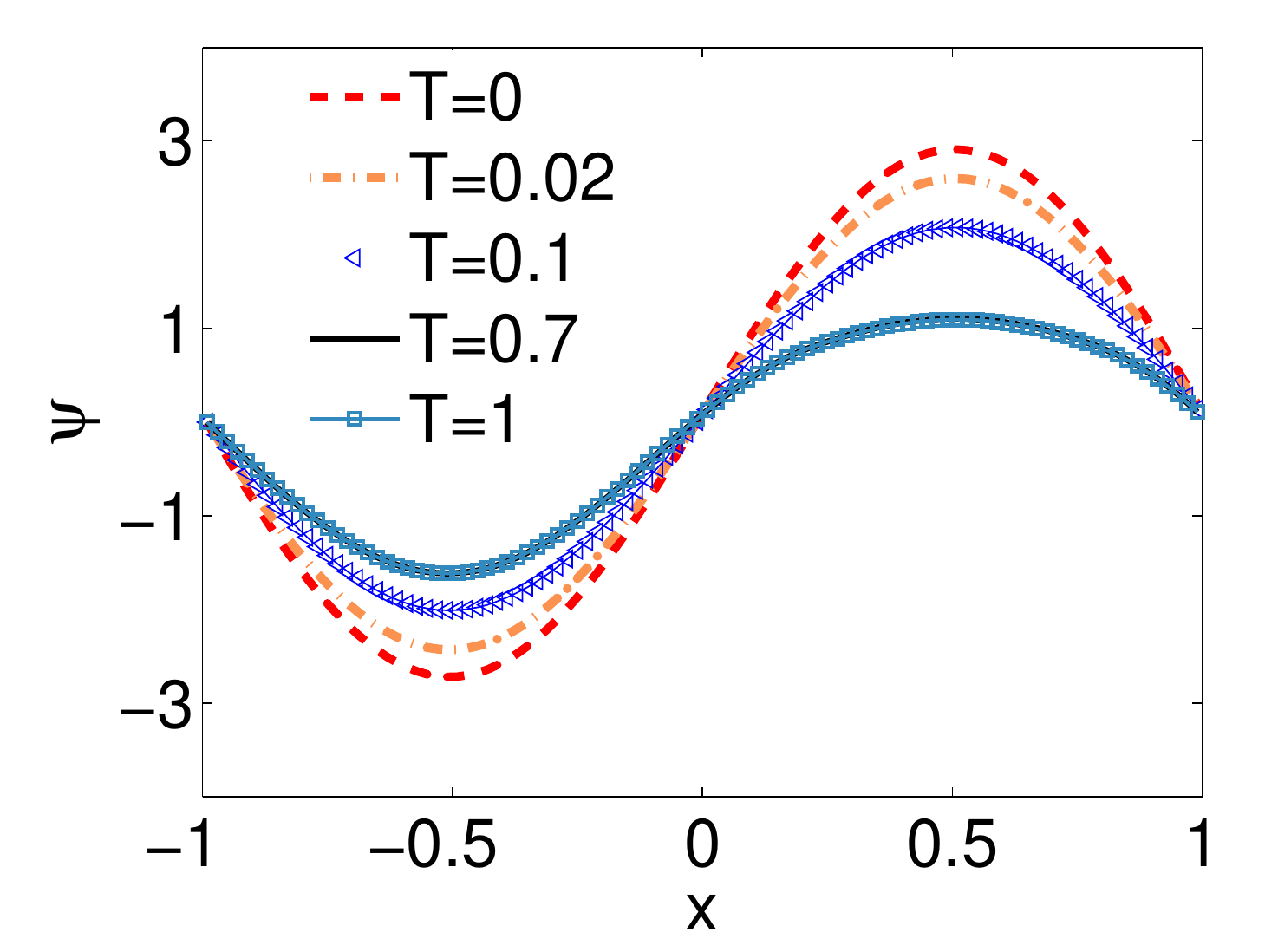}
\caption{Time evolution of numerical solutions of cation concentrations ($c^1$) and potential $\psi$.}
\label{c1c2psi}
\end{figure}

We now test the performance of the proposed scheme in preserving the desired properties, including positivity, mass conservation, and energy dissipation, at the discrete level. We consider the equations~\reff{PNPCHE}  on $\Omega=[-1,1]$ with periodic boundary conditions and initial data  
\[
c^1(x, 0)=c^2(x, 0)=1.
\]
The steric interaction coefficient matrix is taken as $
G=\left(
\begin{matrix}
3.6 & 2.6\\
2.6  & 0.2 \\
\end{matrix}
\right) $, and the charge distribution function is given by 
$$\rho^{f}(x)=5\left[e^{-5(x-\frac12)^2}-e^{-5(x+\frac12)^2}\right],$$ which describes that negative and positive fixed charges are distributed at $x=-\frac12$ and $x=\frac12$, respectively. 
In the numerical simulations, we set the total grid number $N=100$ and a mesh ratio $\Delta t = h$.

Figure~\ref{c1c2psi} displays the snapshot evolution of cation $c^{1}$ concentration and $\psi$ at different times. Since cations and anions distribute uniformly on $\Omega$ at $T=0$, the electrostatic potential $\psi$ is determined by the fixed charges, with maximum and minimum values at $x=\frac12$ and $x=-\frac12$, respectively. As time evolves, the  cations are attracted by the negative fixed charges and get repelled by positive fixed charges, leading to sinusoidal profiles. Accordingly, the electrostatic potential gets screened by ion accumulation. We observe that the profiles at $T=1$ are almost identical to that of $T=0.7$, which implies that the charges in the system have arrived at a steady state. 

\begin{figure}[H]
\centering
{\includegraphics[scale=0.7]{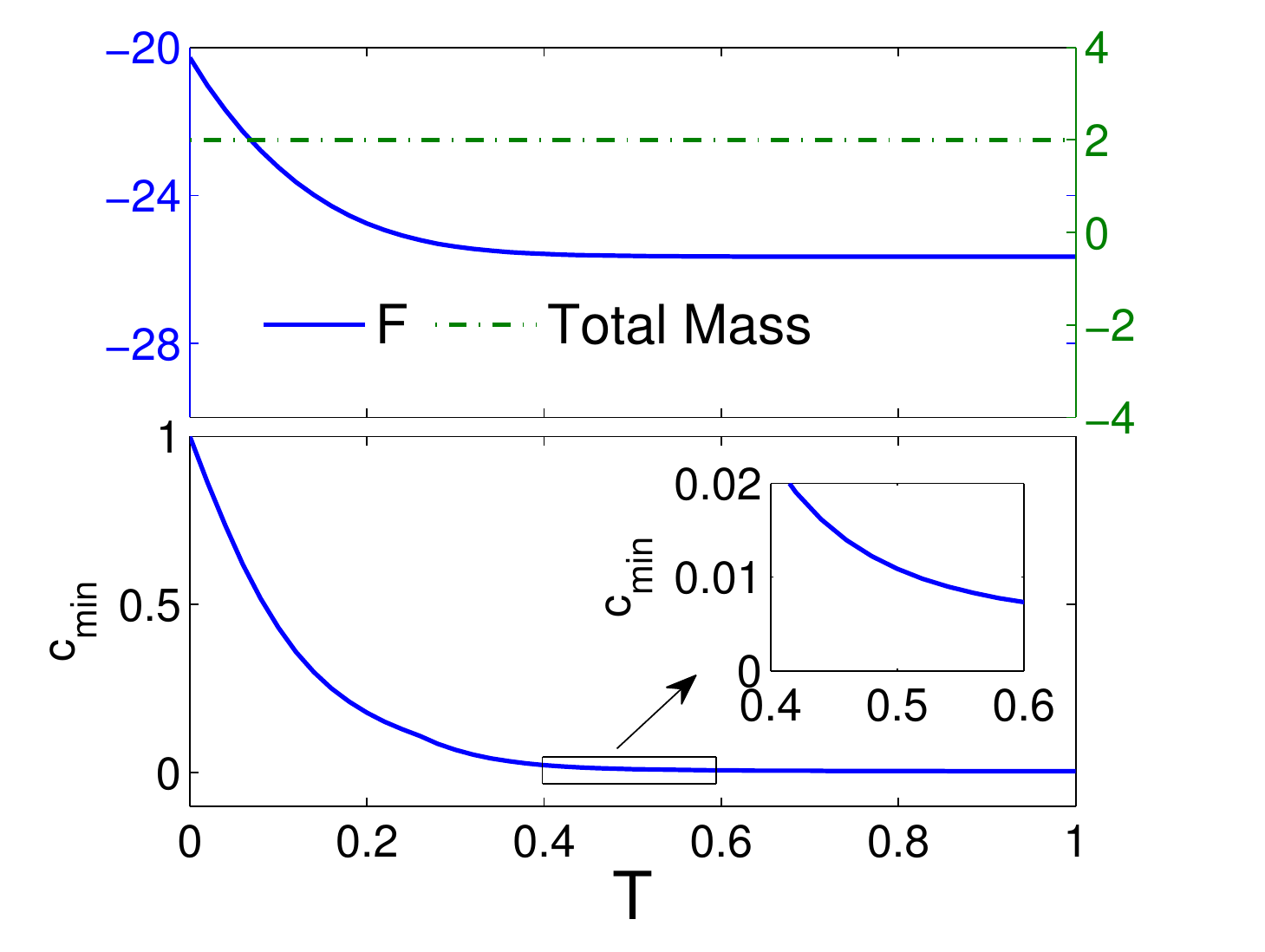}}
\caption{Upper: Time evolution of the free energy and total mass. Lower: Time evolution of the minimum concentration value of both cations and anions. The inset is a zoomed-in plot for a time interval $[0.4, 0.6].$}
\label{Eminc1d}
\end{figure}

We now check structure-preserving properties of the proposed scheme. As shown in the upper plot of Figure ~\ref{Eminc1d},  the total mass of ions represented by the dashed line stays constant for all the time. Also, the free energy decays monotonically, indicating that our numerical scheme is energy stable.   As ions are repelled by fixed charges of the same sign, the local ionic concentrations become very low. It is of physical interest to check positivity of numerical solutions of concentrations.   The lower plot of Figure~\ref{Eminc1d} displays the evolution of minimum values of concentrations of both cation and anion against time, and the inset presents a zoomed-in plot for a time interval $[0.4, 0.6]$. It is demonstrated that, although the concentrations can be very low due to electrostatic repulsion, the numerical solutions of ionic concentrations remain positive for all the time.

\subsection{Applications}
We now apply the PNPCH equations and the corresponding numerical method to study the spatial ionic arrangement and charge dynamics of concentrated electrolytes that have been widely used in various applications, such as electrochemical energy devices. Salient features of concentrated electrolytes include crowding and charge layering in electric double layers, multiple time scale dynamics, self-assembly of nanostructuring both in the bulk and electric double layers (EDLs)~\cite{GavishYochelis_JPCLett16, BGUYochelis_PRE17, GavishEladYochelis_JPCLett18}, etc.  The PNPCH equations, which combine the effect of phase separation with electrodiffusion, are used to investigate these features. 

\begin{figure}[htbp]
\centering
\includegraphics[scale=0.95]{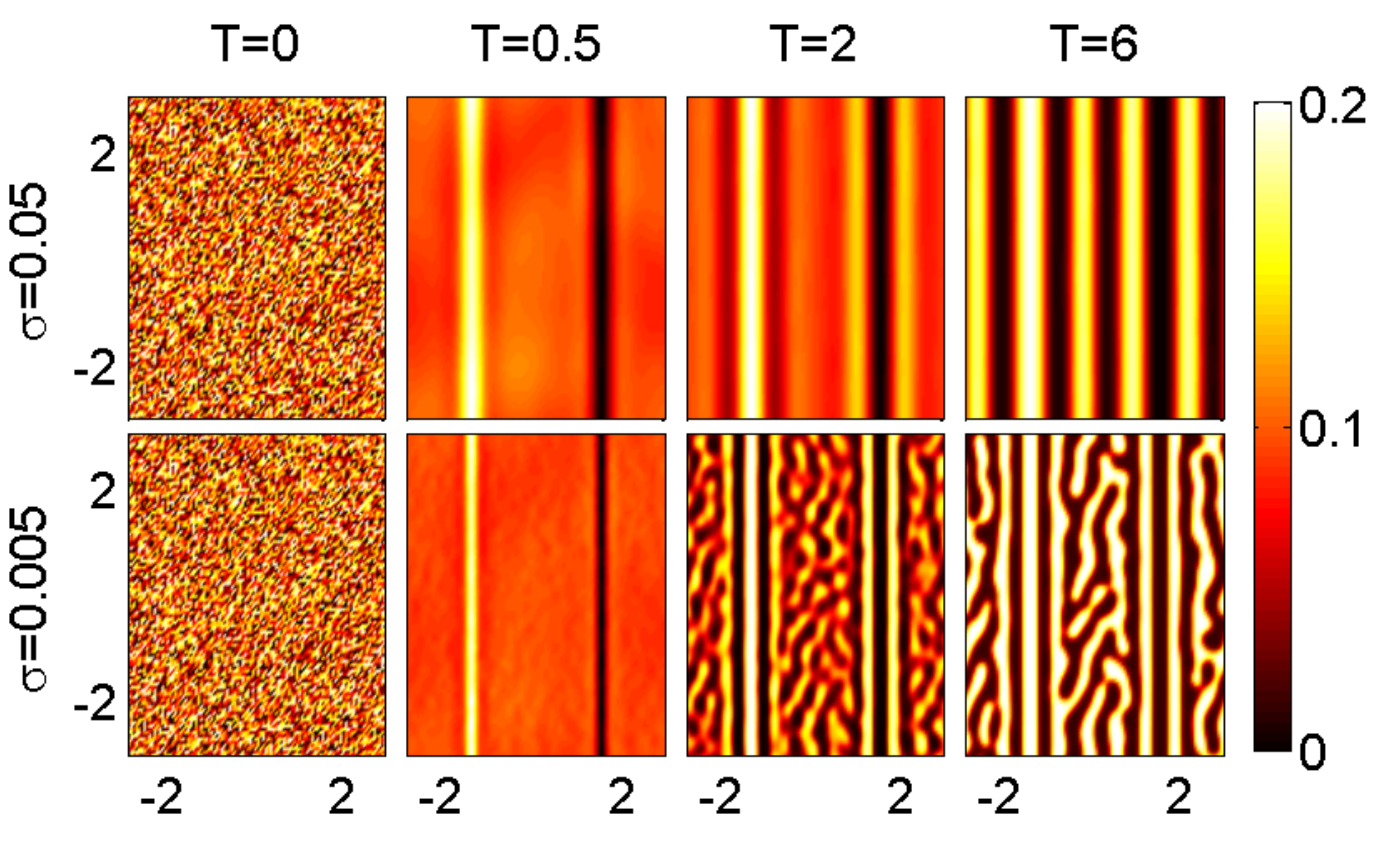}
\caption{Snapshots of the evolution of cation concentrations starting from a random initial data with $\sigma= 0.05$ and $\sigma= 0.005$.}
\label{2dTdiff}
\end{figure}

We consider the equations~\reff{PNPCHE}  on $\Omega=[-3,3]\times[-3,3]$ with periodic boundary conditions and a random initial data. The distribution of fixed charges are given by 
\[
\begin{aligned}
\rho^{f}(x,y)= -\frac12\chi_{\{x=-\frac{3}{2}\}}  + \frac12\chi_{\{x=\frac{3}{2}\}}, 
\end{aligned}
\]
where $\chi_{A}$ is the characteristic function over a set $A$. Such a distribution describes that negative and positive charges are distributed on the lines at $x=-\frac{3}{2}$ and $x=\frac{3}{2}$, respectively. We take gradient energy coefficients $\sigma^{1}=\sigma^{2}=\sigma$ and the steric interaction coefficient matrix $
G=\left(
\begin{matrix}
1 & g^{12}\\
g^{21}  & 1 \\
\end{matrix}
\right),$
where off-diagonal elements $g^{12}= g^{21} = 15$. 

Figure~\ref{2dTdiff} presents several snapshots of the evolution of cation concentrations with $\sigma= 0.05$ and $\sigma= 0.005$ at $T=0,$ $T=0.5,$ $T=2$, and $T=6$. Starting from a random initial distribution, the cations move quickly following the electrostatic potential mainly generated by the fixed charges. For $\sigma= 0.05$, we observe  at $T=2$ that, cations further crowdingly accumulate in the vicinity of negative fixed charges with the emergence of oscillations in diffuse layers of EDLs. This is reminiscent of the overscreening structure studied in the work~\cite{BSK:PRL:2011}. As time further evolves, the cation concentrations show periodic lamellar patterns not only in the EDLs but also in the bulk. For $\sigma= 0.005$, in contrast,  cations begin to develop labyrinthine type of structure in the bulk at $T=2$, but lamellar structures are still favored near fixed charges.  As time evolves, the patterns become clearer and clearer, leading to a totally different structure in comparison with that of $\sigma= 0.05$. The pronounced difference is ascribed to the gradient energy coefficients that penalize large concentration gradients. Smaller gradient energy coefficients allow more concentration oscillations.  Comparing snapshots at different time instants, we find that the electrostatic interactions dominate the ion migration in the early stage, and the effect of phase separation comes into play later in the development of patterns in the bulk. 

\begin{figure}[htbp]
\centering
{\includegraphics[scale=0.7]{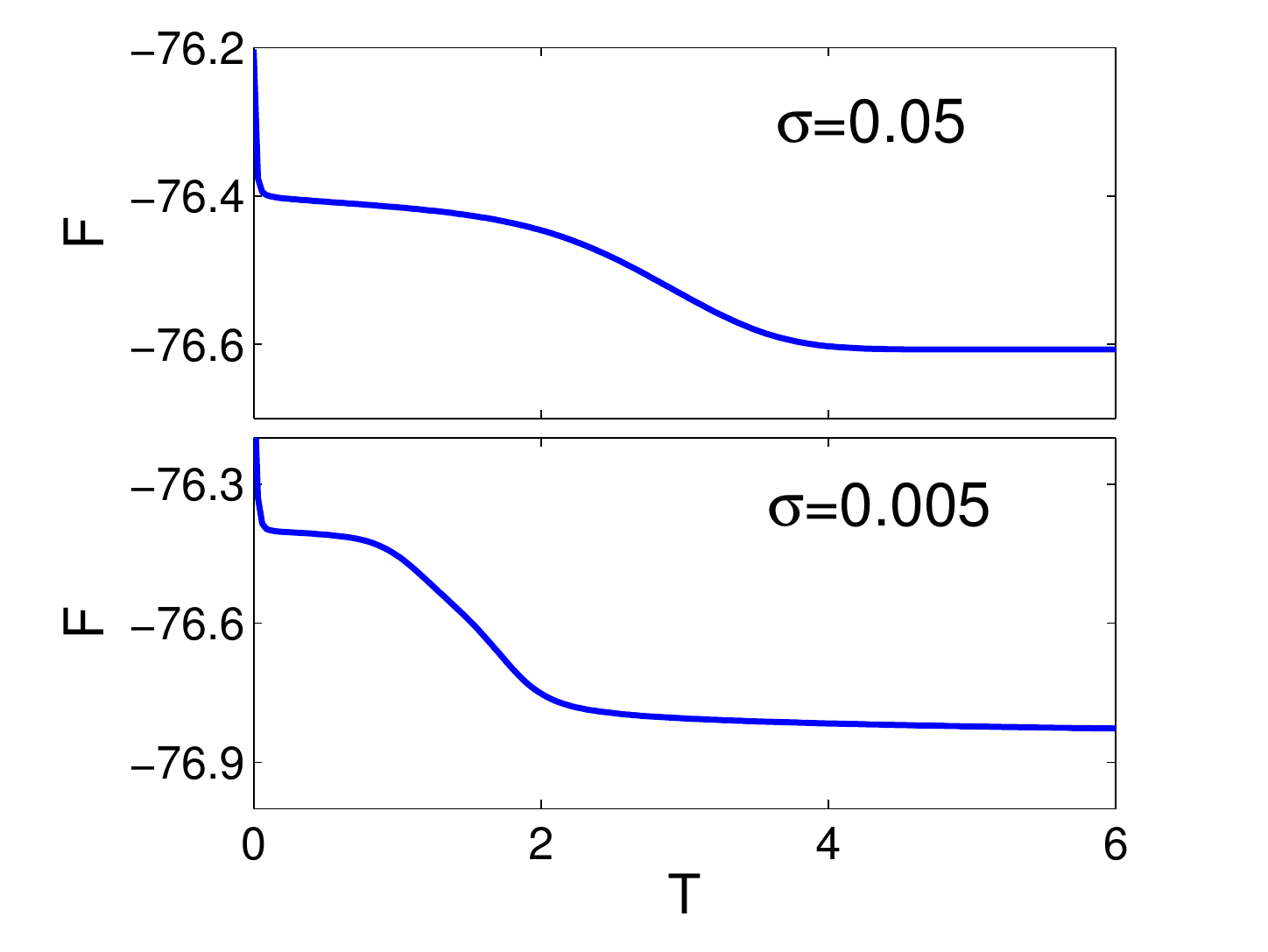}}
\caption{The evolution of free energy with $\sigma= 0.05$ and $\sigma= 0.005$.}
\label{Fmass2d}
\end{figure}

As shown in Figure~\ref{2dTdiff}, the mechanisms of phase separation and electrodiffusion take effects in different stages of the pattern formation, indicating the emergence of multiple time scale charge dynamics. To further understand the charge dynamics, we show in Figure~\ref{Fmass2d} the evolution of free energy of the system. A multi-phase free-energy dissipation can be clearly observed for both $\sigma= 0.05$ and $\sigma= 0.005$, reminiscent of metastability phenomena. In the first stage, the free energy decays sharply on a fast time scale, corresponding to the phase of electrodiffusion, and quickly reaches a metastable state characterized by a plateau in the free energy.  In the second stage, the free energy decays with a relatively longer time scale, corresponding to the formation of patterns in the bulk and EDLs, and reaches an equilibrium eventually.  Comparing the results with different $\sigma$ values, we also observe that, with larger gradient energy coefficients, the metastable state lasts longer and the time scale in the second stage of  free-energy dissipation is larger. Such multi-phase free-energy dissipation with metastability often takes  long time to reach an equilibrium.  Efficient numerical simulations of such dynamics require robust, energy stable numerical schemes that allow large time stepping. Our numerical results demonstrate that the proposed energy stable numerical scheme is capable of effectively capturing such multi-phase dynamics. 





\begin{figure}[htbp]
\centering
\includegraphics[scale=0.9]{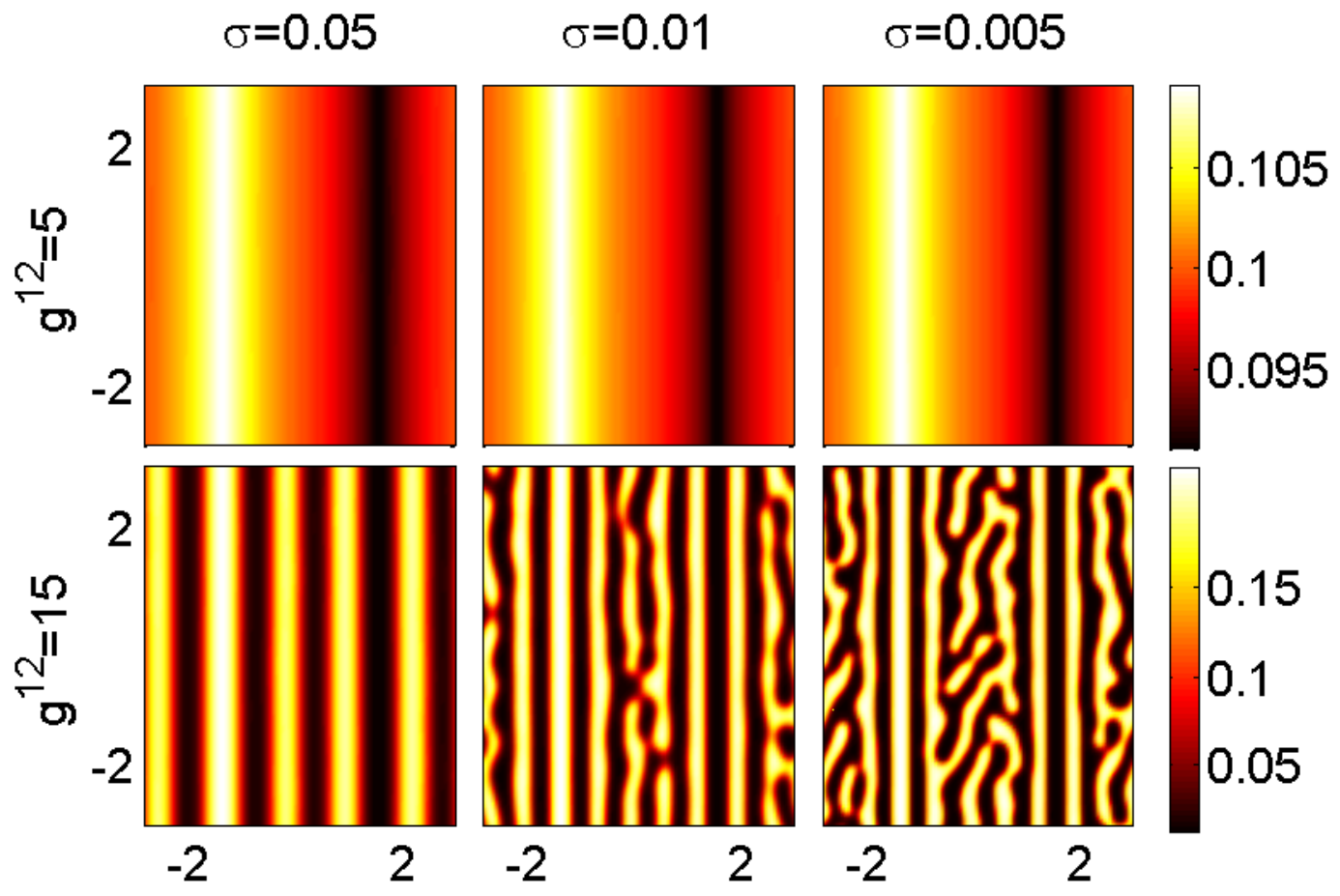}
\caption{Equilibrium states of cation concentrations with various combinations of $g^{12}$ and $\sigma$, starting from the same random initial data.}
\label{2dSteady}
\end{figure}
We next study the interplay between off-diagonal elements in the steric interaction coefficient matrix ($g^{12}$) and the gradient energy coefficient ($\sigma$), and their impact on the development of nanostructures in the equilibrium state. Note that the off-diagonal elements describe the cross interactions of short range.  Figure~\ref{2dSteady} plots equilibrium states of cation concentrations with various combinations of $g^{12}$ and $\sigma$, starting from the same random initial condition.  From the upper row plots, we find that, with a relatively small off-diagonal element $g^{12}=5$,  only EDLs in the vicinity of fixed charges develop in the equilibrium states. With a larger off-diagonal element $g^{12}=15$, we can see rich self-assembled patterns, such as lamellar stripes for a strong gradient energy coefficient $\sigma=0.05$ and labyrinthine patterns for a weak gradient energy coefficient $\sigma=0.005$. Mixed patterns with lamellar EDLs and labyrinthine structures in the bulk also present for an intermediate value of $\sigma=0.01$.   Comparison of two rows of plots reveals that strong cross interactions of short range are necessary for the formation of self-assembled nanostructures in the bulk. Comparison of three columns of plots indicates that more complex nanostructures develop with weaker concentration-gradient regularization.


\section{Concluding remarks}\label{s:Conclusions}
%

In this work, we have derived the PNPCH equations based on a free-energy functional that includes electrostatic free  energies, entropic contribution of ions, steric interactions, and concentration gradient energies. Numerical studies on the PNPCH equations are still missing, especially those concerning preservation of physical structures. We have proposed a novel energy stable, semi-implicit numerical scheme that guarantees mass conservation and positivity at the discrete level. Detailed analysis has revealed that the solution to the proposed nonlinear scheme corresponds to a unique minimizer of a convex functional over a closed, convex domain, establishing the existence and uniqueness of the solution. The positivity of numerical solutions has been rigorously proved via an argument on the singularity of the entropy terms at zero concentrations. Discrete free-energy dissipation has been established as well. Numerical tests on convergence rates have demonstrated that the proposed numerical scheme is first-order accurate in time and second-order accurate in space. Numerical simulations have also verified the capability of the numerical scheme in preserving the desired properties, e.g.,  mass conservation, positivity, and free energy dissipation. 

Moreover, we have applied the PNPCH equations and the proposed scheme to investigate charge dynamics and ionic arrangement, such as self-assembled nanostructures, in highly concentrated electrolytes.  In numerical simulations, we have found that there are multiple time relaxations with distinct time scales, and metastable states present in the relaxation to an equilibrium. Efficient simulations of such dynamics require robust, energy stable numerical schemes that allow large time stepping.  Our numerical results have demonstrated that the proposed numerical scheme is able to capture lamellar patterns and labyrinthine patterns in electric double layers and the bulk, as well as  multiple time scale dynamics with intermediate metastable states. In addition, we have probed the interplay between cross steric interactions and the concentration gradient regularization, and their profound influence on the pattern formation in the  equilibrium state.

\section*{Acknowledgments}
Y. Qian and S. Zhou were supported by National Natural Science Foundation of China (21773165) and National Key R\&D Program of China  (No.~2018YFB0204404). C. Wang was supported by NSF under DMS-1418689. 

 
\bibliographystyle{plain}
\bibliography{PNPCH}

\end{document}